\newcommand{\R}{\mathbb{R}}
\newcommand{\A}{\boldsymbol{A}}
\newcommand{\B}{\boldsymbol{B}}
\newcommand{\E}{\boldsymbol{E}}
\newcommand{\J}{\boldsymbol{J}}
\newcommand{\I}{\boldsymbol{I}}
\newcommand{\D}{\boldsymbol{D}}
\newcommand{\F}{\boldsymbol{F}}
\newcommand{\M}{\boldsymbol{M}}
\newcommand{\Sm}{\boldsymbol{S}}
\newcommand{\X}{\boldsymbol{X}}
\newcommand{\x}{\boldsymbol{x}}
\newcommand{\y}{\boldsymbol{y}}
\newcommand{\jv}{\boldsymbol{j}}
\newcommand{\vv}{\boldsymbol{v}}
\newcommand{\uv}{\boldsymbol{u}}
\newcommand{\el}{\boldsymbol{\ell}}
\newcommand{\la}{\langle}
\newcommand{\ra}{\rangle}
\newtheorem{theorem}{Theorem}[section]
\newtheorem{lemma}[theorem]{Lemma}
\theoremstyle{definition}
\newtheorem{remark}[theorem]{Remark}
\title{Pseudo-Euclidean representations of switching classes of Johnson and Hamming graphs with minimal dimension}
\author{
Hiroshi Nozaki\thanks{Department of Mathematics Education, 
	Aichi University of Education, 
	1 Hirosawa, Igaya-cho, 
	Kariya, Aichi 448-8542, 
	Japan. {\tt hnozaki@auecc.aichi-edu.ac.jp}}
\and 
Masashi Shinohara\thanks{Faculty of Education, 
	Shiga University,
	2-5-1 Hiratsu, Otsu, Shiga 520-0862, 
	Japan.  
	{\tt shino@edu.shiga-u.ac.jp}}
\and
Sho Suda\thanks{Department of Mathematics, National Defense Academy of Japan, Yokosuka, Kanagawa 239-8686, Japan.
{\tt ssuda@nda.ac.jp}}
}
\begin{document}
\maketitle

\renewcommand{\thefootnote}{\fnsymbol{footnote}}
\footnote[0]{2020 Mathematics Subject Classification: 05C62 (05C50, 52C35)
}

\begin{abstract}
This paper considers minimum-dimensional representations of graphs in pseudo-Euclidean spaces, where adjacency and non-adjacency relations are reflected in fixed scalar square values. 
   A representation of a simple graph $(V,E)$ is a mapping $\varphi$ from the vertices to the pseudo-Euclidean space $\mathbb{R}^{p,q}$ such that
 \[
 ||\varphi(u)-\varphi(v)||=\begin{cases}
 a \text{ if $(u,v) \in E$}, \\
 b \text{ if $(u,v) \not\in E$ and $u\ne v$},\\
 0 \text{ if $u=v$}\\
 \end{cases}
 \]
for some $a,b \in \mathbb{R}$, where \[||\boldsymbol{x}||=\langle \langle \boldsymbol{x},\boldsymbol{x} \rangle \rangle=
\sum_{i=1}^p x_i^2- \sum_{j=1}^q x_{p+j}^2
\] is the scalar square of $\boldsymbol{x}$ in $\mathbb{R}^{p,q}$.   
 For a finite set $X$ in $\mathbb{R}^{p,q}$, 
 we define 
 \[
 A(X)=\{||\boldsymbol{x}- \boldsymbol{y} || \colon\, \boldsymbol{x},\boldsymbol{y} \in X, \boldsymbol{x} \ne \boldsymbol{y}\}. 
 \]
 A finite set $X$ in $\mathbb{R}^{p,q}$ is called an $s$-indefinite-distance set
 if $|A(X)|=s$ holds. 
 An $s$-indefinite-distance set in $\mathbb{R}^{p,0}=\mathbb{R}^p$ is called an $s$-distance set.  
    Graphs obtained from Seidel switching of a Johnson graph sometimes admit Euclidean or pseudo-Euclidean representations in low dimensions relative to the number of vertices. 
    For instance, Lison\v{e}k~(1997) obtained a largest 2-distance set in $\mathbb{R}^8$ and largest spherical 2-indefinite-distance sets in $\mathbb{R}^{p,1}$ for each $p\geq 10$ from the switching classes of Johnson graphs. In the present paper, we consider graphs in the switching classes of Johnson and Hamming graphs of diameter 2 and classify those that admit representations in $\mathbb{R}^{p,q}$ with the smallest possible dimensionality $p+q$ among all graphs in the same class. 
    The method not only recovers known results, such as the largest 2-(indefinite)-distance sets constructed by Lison\v{e}k, but also provides a unified framework for determining the minimum dimension of representations for entire switching classes of strongly regular graphs.
   
\end{abstract}

\bigskip

\noindent
\textbf{Keywords:} Pseudo-Euclidean representation, Indefinite-distance set, Switching class, Johnson graph, Hamming graph.

\section{Introduction}
Representing graphs in geometric spaces is a classical theme in combinatorics and geometry. In particular, representations in Euclidean or pseudo-Euclidean spaces, where scalar square values encode adjacency relations, have been studied in connection with distance sets and extremal configurations \cite{I11,M19,N19,NS16,NS18,R10}. This paper focuses on such representations for graphs arising from Seidel switching of Johnson and Hamming graphs, and aims to determine the minimum dimensions required for such embeddings.

 A {\it representation} of a simple graph $(V,E)$ is a mapping $\varphi$ from the vertices to the pseudo-Euclidean space $\R^{p,q}$ such that
 \[
 ||\varphi(u)-\varphi(v)||=\begin{cases}
 a \text{ if $(u,v) \in E$}, \\
 b \text{ if $(u,v) \not\in E$ and $u\ne v$},\\
 0 \text{ if $u=v$}\\
 \end{cases}
 \]
 for some $a,b \in \mathbb{R}$. 
 Here $||\boldsymbol{x}||=\langle \langle \boldsymbol{x},\boldsymbol{x} \rangle \rangle$ and the bilinear form is defined as  
\[
\langle \langle \x, \y \rangle \rangle=x_1 y_1 +\cdots +x_p y_p-x_{p+1}y_{p+1}-\cdots -x_{p+q}y_{p+q}
\] 
for $\x=(x_1,\ldots,x_{p+q})^\top, \y=(y_1,\ldots,y_{p+q})^\top \in \mathbb{R}^{p+q}$. 
The function $||\boldsymbol{x} - \boldsymbol{y}||$ dose not define a distance unless $\mathbb{R}^{p,q}$ is Euclidean, that is, $\mathbb{R}^{d,0}$.  
In general, the values $a$ and $b$ may take non-positive values.  
When either $a = 0$ or $b = 0$, the mapping may not be injective.
 Our primary concern lies in determining the minimal dimensionality $p+q$ required for a representation with $a \ne b$. 
 This is closely related to the problem of identifying the largest $2$-indefinite-distance sets. 
 
 A subset $X$ of $\mathbb{R}^{p,q}$ ({\it resp}.\ $\mathbb{R}^d$) is called an {\it $s$-indefinite-distance set} ({\it resp}.\ {\it $s$-distance set}) if the size of 
 \[
A(X)=\{|| \x-\y||  \colon\, \x, \y  \in X, \x \ne \y \}
\]
is equal to $s$. 
An $s$-indefinite-distance set is said to be \emph{spherical} if it is on the sphere $\{\x \in \R^{p,q} \mid \langle \langle \x, \x \rangle \rangle =1\}$. 
When $0 \not \in A(X)$, there exists a natural upper bound $|X| \leq \binom{p+q+s}{s}$ for an $s$-indefinite-distance set $X\subset \mathbb{R}^{p,q}$ \cite{BBS83, BT, NSSpre}. 
One of the major problems concerning $s$-indefinite-distance sets is to determine the maximum possible size of such a set for given $s$ and $(p,q)$. 
For a spherical $s$-indefinite-distance set in $\mathbb{R}^{p,q}$, there also exists an upper bound $|X|\leq \binom{p+q+s-1}{s}$ for $p=1$ or $q=1$ \cite{BT}, and 
$|X| \leq \binom{p+q+s-1}{s}+\binom{p+q+s-2}{s-1}$ otherwise \cite{NSSpre}.  

The known (spherical) $2$-indefinite-distance sets that attain the current upper bounds arise as representations of graphs in the switching classes of Johnson graphs \cite{L97}.  
These sets have been obtained only in $\mathbb{R}^8$, and in the spherical case within pseudo-Euclidean spaces $\mathbb{R}^{p,1}$ for $p \geq 10$.

Seidel switching is an operation performed on a simple graph $(V,E)$, where the adjacency relation of each edge between $U$ and $V \setminus U$ is reversed for a given subset $U \subset V$.  
The switching class of a graph is the set of all graphs obtained by performing Seidel switching with respect to any subset $U \subset V$.
A natural question is whether there exist other graphs in the switching class of a Johnson graph such that these graphs have representations as $2$-indefinite-distance sets attaining the bound $|X| \leq \binom{d+2}{2}$. 

Lison\v{e}k \cite{L97} constructs the largest $2$-indefinite-distance sets
from Johnson graphs by assigning suitable values to the distances
in a graph obtained through specific switching operations,
and then computing the required dimension.
However, this approach relies on particular choices of the subset $U$
and the distance assignments,
and does not explain how such choices should be made in general
to obtain low-dimensional representations.
The present paper clarifies the principles behind selecting candidates for $U$
and the corresponding distances
that lead to low-dimensional representations
for strongly regular graphs.

    The {\it minimum dimensionality} of the switching class of a simple graph is defined as the smallest possible value of $p+q$ such that a graph in the switching class admits a representation in $\mathbb{R}^{p,q}$. 
In this paper, we present a general method for determining the minimum dimensionality $p+q$ in representations of all graphs in the switching class of a strongly regular graph $G$.  
Using this method, we determine the minimum dimensionalities of the switching classes of Johnson and Hamming graphs; see Tables~\ref{tab:John} and~\ref{tab:Ham}.

This paper is organized as follows. In Section \ref{sec:2}, we review the basic terminology and fundamental results required for our study. 
In Section~\ref{sec:3}, for a regular graph $G$, we investigate conditions under which the dimensions of representations of graphs in the switching class of $G$ can be determined solely from the spectral data of $G$. 
If a subset $U\subset V$ creates an equitable partition $\{U,V\setminus U\}$, we can explicitly calculate the dimension of the resulting graph from switching with respect to $U$. 
Section \ref{sec:4} addresses determining the minimum dimensionality $d=p+q$ of the switching class of Johnson and Hamming graphs. 
While our method does not yield any new examples of $2$-indefinite-distance sets attaining the upper bound, it successfully reproduces the known largest sets constructed by Lisoněk in $\mathbb{R}^d$ for $d=4,5,8$, and in the spherical case $\mathbb{R}^{p,1}$ for $p \geq 10$.

\begin{table}[h]
    \centering
 {\small    
\begin{tabular}{c|cccccccc}
   $m$  &   4&5&6&7&8&9&10 &$m\geq 11$  \\ \hline
   $|J(m,2)|$ &6&10&15&21&28&36&45&$m(m-1)/2$ \\
    $d=p+q$ & 2&4&5&6&7&8&8&$m-1$\\
    $(p,q)$ & $(2,0)$, $(1,1)$ &$(4,0)$&$(5,0)$&$(6,0)$&$(7,0)$&$(8,0)$&$(8,0)$&$(m-1,0)$, $(m-2,1)$
\end{tabular}
}
    \caption{Minimum dimensionality of the switching class of Johnson graph $J(m,2)$}
    \label{tab:John}
\end{table}

\begin{table}[h]
    \centering
 {\small    
\begin{tabular}{c|cccc}
   $m$  &   2&3&4& $m\geq 5$  \\ \hline
   $|H(2,m)|$ &4&9&16&$m^2$ \\
    $d=p+q$ & 1&4&5&$2m-2$\\
    $(p,q)$ & $(1,0)$ &$(4,0)$&$(5,0)$&$(2m-2,0)$, $(2m-3,1)$
\end{tabular}
}
    \caption{Minimum dimensionality of the switching class of Hamming graph $H(2,m)$}
    \label{tab:Ham}
\end{table}

\section{Preliminaries} \label{sec:2}
Let $G=(V,E)$ be a simple graph of order $n$.  
The {\it adjacency matrix} $\A$ of $G$ is the symmetric $(0,1)$-matrix indexed by $V$ with $(u,v)$-entries
\[
\A(u,v)=\begin{cases}
1 \text{ if $(u,v) \in E$},\\
0 \text{ otherwise}.
\end{cases}
\]
Let $\overline{\A}$ be the adjacency matrix of the complement graph $\overline{G}$. 
 A matrix $\D (a,b)=a \A+b \overline{\A}$ is called a {\it dissimilarity matrix} on $G$ for $a, b \in \R$.  
 A dissimilarity matrix $\D(a,b)$ on $G$ is said to be {\it representable in $\mathbb{R}^{p,q}$} if there exists a subset $X \subset \mathbb{R}^{p,q}$ such that $\D(X) = \D(a,b)$.
Here, $\D(X)$ denotes the matrix $(||\boldsymbol{x} - \boldsymbol{y}||)_{\boldsymbol{x}, \boldsymbol{y} \in X}$. 
 This set $X$ is called a {\it representation of $\D(a,b)$}. 
 
Let $\X$ be an $n\times (p+q)$ matrix whose rows
 are the coordinates of the $n$ points of $X\subset \R^{p,q}$. 
The {\it dimensionality} of $\D(a,b)$ is the least ${\rm rank}(\X)$ among all representations $X$ of $\D(a,b)$. 
Let $\I$ denote the identity matrix and $\jv$ the all-ones vector. 
For a symmetric matrix $\M$ and a vector $\el$ with $\el^\top \jv=1$, let $\F_{\M}(\el)$ denote the symmetric matrix defined by
\[
\F_{\M}(\el)=- (\I-\jv \el^{\top})\M (\I-\el \jv^{\top}). 
\]
Note that the matrix $\I-\el \jv^{\top}$ is a projection matrix onto 
$\jv^\perp$. 
By direct calculation, we can show the equality 
\[
\F_{\D(X)}(\el)=
(-2\la\la \x-\uv,\y-\uv\ra \ra )_{\x,\y \in X}, \]
 where $\uv=
\sum_{\x \in X} \el_{\x}\x$ and $\el_{\x}$ is the $\x$-th entry of $\el$. 

The pair $(r,s)$ is called the {\it signature}  of symmetric matrix $\M$, where $r$ ({\it resp}.\ $s$) is the number of positive ({\it resp}.\ negative) eigenvalues of $\M$ counting multiplicities, 
and write ${\rm sign}(\M)=(r,s)$. 
Gower~\cite{G85} showed that the signature of $\F_{\M}(\el)$ remains constant for all vectors $\el$ satisfying $\el^\top \jv = 1$, and derived the following theorem. 
\begin{theorem}[Theorems 8 and 9 in \cite{G85}]
Let $(p,q)$ be the signature of $\F_{\D (a,b)}(\el)$.  Then, the dimensionality of $\D(a,b)$ is $p+q$.  Moreover, 
the dimension $(s,t)$ of any representation of $\D (a,b)$  satisfies $s \geq p$ and $t \geq q$.  
\end{theorem}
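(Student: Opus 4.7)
\emph{Sketch.} The plan is to reduce the theorem to a question about the inertia of a Gram matrix under the indefinite form $\la\la\cdot,\cdot\ra\ra$ on $\R^{p,q}$, and then read dimensions off the spectral data of $\F_{\D(a,b)}(\el)$.

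I would first establish the identity $\F_{\D(X)}(\el)=(-2\la\la\x-\uv,\y-\uv\ra\ra)_{\x,\y\in X}$ stated just before the theorem. A direct computation suffices: decompose $\D(X)=\vv\jv^\top+\jv\vv^\top-2(\la\la\x,\y\ra\ra)_{\x,\y}$ with $\vv_{\x}=\la\la\x,\x\ra\ra$, then observe that $(\I-\jv\el^\top)\jv=\mathbf{0}$ and $\jv^\top(\I-\el\jv^\top)=\mathbf{0}$ because $\el^\top\jv=1$, so the two rank-one pieces are killed by the flanking projections. What survives is a scalar multiple of the centered Gram matrix $(\la\la\x-\uv,\y-\uv\ra\ra)_{\x,\y}$ with $\uv=\sum_\x\el_\x\x$, transferring the signature problem from $\F$ to this Gram matrix.

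For the lower bound on a representation $X\subset\R^{s,t}$, let $\widetilde{\X}$ be the $|X|\times(s+t)$ coordinate matrix whose rows are $\x-\uv$; the centered Gram matrix then factors as $\widetilde{\X}\,\mathrm{diag}(\I_s,-\I_t)\,\widetilde{\X}^\top$, and expanding this as a sum of rank-one outer products (the first $s$ columns contributing positive semidefinite terms and the last $t$ negative semidefinite ones) gives the Sylvester-type inertia bound of at most $s$ positive and $t$ negative eigenvalues. Combined with the $\el$-invariance of $\mathrm{sign}(\F_\M(\el))$, this forces $s\geq p$ and $t\geq q$, and in particular the dimensionality is at least $p+q$. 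For the matching upper bound I would reverse the construction: diagonalize $\F_{\D(a,b)}(\el)=\sum_i\mu_i v_i v_i^\top$ orthogonally, take the $i$-th column of $\widetilde{\X}$ to be $\sqrt{|\mu_i|/2}\,v_i$ placed in the positive or negative block of $\R^{p,q}$ according to $\mathrm{sgn}(\mu_i)$, and let $X$ be the rows of $\widetilde{\X}$; a short calculation, expanding $\|\widetilde{\x}-\widetilde{\y}\|$ in terms of inner products and using the identity from the first step, confirms $\|\widetilde{\x}-\widetilde{\y}\|=\D(a,b)_{\x\y}$.

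The one genuinely delicate ingredient is the $\el$-invariance of $\mathrm{sign}(\F_\M(\el))$, which is Gower's Theorem~8 and which I would invoke rather than reprove. Conceptually $\F_\M(\el)$ is a bilinear form on the quotient $\R^n/\la\jv\ra$, and different admissible $\el$ correspond to different affine sections of this quotient, making the resulting symmetric matrices congruent; this congruence is the substantive content of signature invariance and is where the main obstacle sits if one attempts the proof without citing Gower.
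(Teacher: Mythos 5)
This theorem carries no proof in the paper at all --- it is imported verbatim from Gower (Theorems 8 and 9 of \cite{G85}), so there is nothing internal to compare against. Your reconstruction is the standard argument and it is sound: kill the two rank-one terms $\vv\jv^\top+\jv\vv^\top$ with the flanking projections using $\el^\top\jv=1$, reduce to the centered Gram matrix, get the lower bound from the factorization $\widetilde{\X}\,\mathrm{diag}(\I_s,-\I_t)\,\widetilde{\X}^\top$ via Sylvester/Weyl, and get the matching representation by spectral decomposition of $\F_{\D(a,b)}(\el)$ with columns $\sqrt{|\mu_i|/2}\,v_i$ sorted by $\mathrm{sgn}(\mu_i)$. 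Two small remarks. First, your caution in calling the surviving term ``a scalar multiple of the centered Gram matrix'' is warranted: carrying out the computation with the paper's definition $\F_{\M}(\el)=-(\I-\jv\el^\top)\M(\I-\el\jv^\top)$ and $\|\x-\y\|=\|\x\|+\|\y\|-2\la\la\x,\y\ra\ra$ yields $+2\la\la\x-\uv,\y-\uv\ra\ra$, not the $-2$ displayed in the paper; the $+2$ sign is the one consistent with the theorem (a Euclidean point set must produce a positive semidefinite $\F$ with signature $(d,0)$), so the paper's displayed identity appears to have a sign typo. Second, the one ingredient you propose to import as a black box --- the $\el$-invariance of $\mathrm{sign}(\F_{\M}(\el))$ --- is actually a corollary of the two halves of your own argument: for any fixed admissible $\el_1$ your construction produces a representation in $\R^{p_{\el_1},q_{\el_1}}$, and your lower bound applied with any other $\el_2$ to that same representation gives $p_{\el_1}\geq p_{\el_2}$ and $q_{\el_1}\geq q_{\el_2}$; symmetry forces equality. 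So the proof can be made self-contained without citing Gower's Theorem 8, and the ``genuinely delicate ingredient'' dissolves.
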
  

The signature of $\F_{\D (a,b)}(\el)$ can be calculated
by the eigenvalues and their main angles of  $\D (a,b)$ as Theorem~\ref{thm:dim_PMP} below. 
Let $\M$ be a real symmetric matrix of order $n$. Let $\tau_1,\ldots, \tau_r$ be the distinct eigenvalues of $\M$. 
Let $E_i$ be the eigenspace of $\tau_i$, and let $\E_i$ be the orthogonal projection matrix onto $E_i$, for $i = 1, \ldots, r$.
The {\it main angle} of $\tau_i$ (or $E_i$) is defined to be the value
\[
\beta_i=\frac{1}{\sqrt{n}}\sqrt{ (\E_i \jv)^\top (\E_i \jv)}. 
\]
It is noteworthy that $0\leq \beta_i \leq 1$ and $\sum_{i=1}^r \beta_i^2=1$ hold, and $\beta_i=0$ if and only if $E_i \subset \jv^\perp$. 
An eigenvalue $\tau_i$ is {\it main} if $\beta_i \ne 0$. 
For main eigenvalue $\tau_i$,  it follows that
\[
n\beta_i^2=\max_{\vv \in E_i: \vv^\top \vv=1} (\vv^\top \jv)^2=
\max_{\vv \in E_i}\frac{(\vv^\top \jv)^2}{\vv^\top \vv}
\]
and 
\begin{equation} \label{eq:mainangle}
\frac{1}{n\beta_i^2}= \min_{\vv \in E_i} \frac{\vv^\top \vv}{(\vv^\top \jv)^2}=
\min_{\vv \in E_i:\, \vv^\top \jv=1} \vv^\top \vv.  
\end{equation}

\begin{theorem}[{\cite{NSSpre}}] \label{thm:dim_PMP}
Let $\M$ be a real symmetric matrix with distinct main eigenvalues
$\lambda_1, \ldots,\lambda_r$.
Let $\beta_i$ be the main angle of $\lambda_i$ for $i \in \{1,\ldots, r\}$. 
Let $(p,q)$ be the signature of $\M$.
Let $E_0$ be the eigenspace associated with the eigenvalue $0$ of $\M$, and set $E_0 = \emptyset$ if $0$ is not an eigenvalue of $\M$.   Then, the signature of $\F_{\M}(\el)$ is determined as follows. 
\begin{enumerate}
\item If $E_0 \subset \jv^\perp$ and $\sum_{i=1}^r \beta_i^2/\lambda_i=0$, 
then the signature of $\F_{\M}(\el)$ is $(q-1,p-1)$. 
\item If $E_0 \subset \jv^\perp$ and $\sum_{i=1}^r \beta_i^2/\lambda_i>0$, 
then the signature of $\F_{\M}(\el)$ is $(q,p-1)$. 
\item If $E_0 \subset \jv^\perp$ and $\sum_{i=1}^r \beta_i^2/\lambda_i<0$, 
then the signature of $\F_{\M}(\el)$ is $(q-1,p)$. 
\item If $E_0 \not\subset \jv^\perp$, then the signature of $\F_{\M}(\el)$ is $(q,p)$.
\end{enumerate}

\end{theorem}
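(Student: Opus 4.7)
My plan is to exploit Gower's invariance of the signature of $\F_\M(\el)$ in $\el$ by specialising to $\el = \jv/n$. With this choice, $\I - \el\jv^\top$ equals the orthogonal projection $Q := \I - \jv\jv^\top/n$ onto $\jv^\perp$, so $\F_\M(\el) = -Q\M Q$. Since $Q\jv = 0$, the vector $\jv$ lies in the kernel of $\F_\M(\el)$, and its nonzero spectrum coincides with that of the symmetric operator $-\M|_{\jv^\perp}$. Thus it suffices to compute ${\rm sign}(\M|_{\jv^\perp})$ and swap its two components.

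To analyse $\M|_{\jv^\perp}$ I would decompose it using the spectral structure of $\M$. For each main eigenvalue $\lambda_i$ set $\pi_i = \E_i\jv$, so $\|\pi_i\|^2 = n\beta_i^2 > 0$; by the definition of main angle, the projection of $\jv$ onto every non-main eigenspace vanishes, so every non-main eigenspace is contained in $\jv^\perp$. Moreover each main eigenspace splits orthogonally as $E_i = {\rm span}(\pi_i) \oplus (E_i \cap \jv^\perp)$. With $U = {\rm span}(\pi_1,\ldots,\pi_r)$ and $W = U \cap \jv^\perp$, I obtain the orthogonal decomposition
\[
\jv^\perp \;=\; W \;\oplus\; \bigoplus_{i=1}^r (E_i \cap \jv^\perp) \;\oplus\; (\text{non-main eigenspaces}),
\]
which is also $\M$-orthogonal because eigenspaces for distinct eigenvalues are $\M$-orthogonal and $W \subset U$ is Euclidean-orthogonal to each $E_i\cap\jv^\perp$. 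So signatures add, giving ${\rm sign}(\M|_{\jv^\perp}) = {\rm sign}(\M|_W) + (p,q) - (p_U,q_U)$, where $(p_U,q_U)$ is the signature of $\M|_U$; since $\M|_U$ is diagonal with entries $\lambda_1,\ldots,\lambda_r$ in an orthonormal basis of $U$, this is simply the number of positive (resp.\ negative) distinct main eigenvalues.

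The technical core is computing ${\rm sign}(\M|_W)$. In the orthonormal basis $e_i = \pi_i/(\sqrt n\,\beta_i)$ of $U$, the operator $\M|_U$ becomes $D := {\rm diag}(\lambda_1,\ldots,\lambda_r)$ and $\jv/\sqrt n$ has coordinates $w = (\beta_1,\ldots,\beta_r)$, so $W = w^\perp$ inside $U$. In Cases 1--3 every $\lambda_i$ is nonzero, and a Schur-complement / Sylvester-law argument on the bordered matrix
\[
\begin{pmatrix} D & w \\ w^\top & 0 \end{pmatrix}
\]
shows that ${\rm sign}(\M|_W)$ equals $(p_U - 1, q_U)$, $(p_U, q_U - 1)$, or $(p_U - 1, q_U - 1)$ (with one extra zero eigenvalue in the last subcase) according as $w^\top D^{-1} w = \sum_{i=1}^r \beta_i^2/\lambda_i$ is positive, negative, or zero. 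Combining with the tally of the previous paragraph and swapping $(p,q)$ to $(q,p)$ reproduces Cases 1--3. In Case 4 some $\lambda_i$ equals $0$, so $D$ is singular; eliminating the constraint $w^\top v = 0$ using the nonzero coordinate of $w$ corresponding to the vanishing $\lambda_i$ reduces $v^\top D v$ on $W$ to $\sum_{\lambda_j \ne 0} \lambda_j v_j^2$ in $r-1$ free variables, giving ${\rm sign}(\M|_W) = (p_U,q_U)$ and hence ${\rm sign}(\F_\M(\el)) = (q,p)$. The delicate points are the two boundary subcases ($\sum \beta_i^2/\lambda_i = 0$, or some $\lambda_i = 0$), where the bordered matrix has a nontrivial kernel and Sylvester's law has to be applied carefully so that no eigenvalues are lost.
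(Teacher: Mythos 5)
The paper does not actually prove this theorem: it is imported from \cite{NSSpre} as a black box, so there is no internal proof to compare yours against. Judged on its own, your argument is correct and self-contained. The specialisation to $\el=\jv/n$ (legitimate by the Gower invariance quoted just before the theorem) turns $\F_{\M}(\el)$ into $-Q\M Q$ with $Q=\I-\jv\jv^\top/n$, whose signature is the swap of the signature of the quadratic form $\vv\mapsto \vv^\top\M\vv$ restricted to $\jv^\perp$; your orthogonal (and $\M$-orthogonal) decomposition of $\jv^\perp$ into $W$, the slices $E_i\cap\jv^\perp$, and the non-main eigenspaces is dimensionally and spectrally consistent, and it correctly isolates the whole difficulty in the $r$-dimensional form $D=\mathrm{diag}(\lambda_1,\dots,\lambda_r)$ restricted to the hyperplane $w^\perp$ with $w=(\beta_1,\dots,\beta_r)$. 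The hyperplane-restriction signature computation is also right: when $D$ is nonsingular the $D$-orthogonal complement of $w^\perp$ is spanned by $D^{-1}w$, so the sign of $w^\top D^{-1}w=\sum_i\beta_i^2/\lambda_i$ decides which of $(p_U-1,q_U)$, $(p_U,q_U-1)$, $(p_U-1,q_U-1)$ occurs (the last via the interlacing count when $D^{-1}w$ falls into the radical), and when exactly one $\lambda_i=0$ the elimination of the corresponding coordinate gives $(p_U,q_U)$ unchanged. These reassemble into the four stated cases after adding $(p-p_U,q-q_U)$ and swapping. The only points I would ask you to write out explicitly in a final version are (a) that $\mathrm{sign}(-Q\M Q)$ as an $n\times n$ matrix equals the swapped signature of the compressed form on $\jv^\perp$ (one extra zero eigenvalue from $\jv$ itself, which affects neither component), and (b) the interlacing inequality $p'\ge p_U-1$, $q'\ge q_U-1$ used in the two degenerate subcases, since that is where a sloppy application of Sylvester's law could silently drop an eigenvalue.
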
 

Let $(V,E)$ be a connected $k$-regular graph of order $n$. 
A graph $(V,E)$ is a {\it strongly regular graph}
if there exist $\lambda, \mu \in \mathbb{Z}$ 
such that 
the number of common neighbours of any pair $u, v \in V$ with $(u,v) \in E$ is $\lambda$, 
and that of any pair $u, v \in V$ with $(u,v) \not \in E$ is $\mu$. 
Let $\A$ be the adjacency matrix of a strongly regular graph $(V,E)$. 
The algebra generated by $\A$ over $\mathbb{C}$ is called a {\it Bose--Mesner algebra} $\mathfrak{A}$. The algebra $\mathfrak{A}$ is spanned by $\{\I, \A, \overline{\A}\}$, where $\overline{\A}$ is the adjacency matrix of the complement graph. 
There are only 3 distinct eigenvalues of $\A$, namely $\lambda_0=k$, $\lambda_1$, and $\lambda_2$. 
Let $E_i$ be the eigenspace of $\lambda_i$, and $\E_i$ the orthogonal projection matrix onto $E_i$. 
The algebra $\mathfrak{A}$ is also spanned by $\{\E_0,\E_1,\E_2\}$, which is the primitive idempotents. 
A dissimilarity matrix $\D$ belongs to $\mathfrak{A}$ and can be expressed by $\{\E_0,\E_1,\E_2\}$. 
 Writing $\D=a_0\E_0+a_1\E_1+a_2\E_2$, we obtain
\begin{equation} \label{eq:srg_rep}
    \F_{\D} ((1/n) \jv)= a_1 \E_1 +a_2 \E_2
\end{equation}
since $\I-(1/n)\jv \jv^\top=\I-\E_0=\E_1+\E_2$. 

\begin{theorem}[{\cite[Theorem 3.3 and Remark 3.4]{NSSpre}}] \label{thm:dimrep}
Let $(V,E)$ be a strongly regular graph of order $n$, with eigenspaces $E_0, E_1, E_2$. 
Let $\E_i$ be the orthogonal projection matrix onto $E_i$, whose rank is $m_i$. Let $\D$ be a dissimilarity matrix on $(V,E)$.  
Then, the matrix $\F_{\D}((1/n)\jv)$ forms $a_1 \E_1+a_2 \E_2$ for some $a_1,a_2 \in \mathbb{R}$. In particular, the dimensionality of $\D$ is $\epsilon(a_1) m_1 +\epsilon (a_2) m_2$, where $\epsilon(a)=1$ if $a\ne 0$ and $\epsilon(a)=0$ if $a= 0$. 
\end{theorem}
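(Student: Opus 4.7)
The plan is to treat the two assertions separately: the first is essentially a restatement of equation~(\ref{eq:srg_rep}), while the second reduces to a signature calculation on a linear combination of mutually orthogonal projections, to which Gower's theorem then applies.

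Since $\D = a\A + b\overline{\A}$ lies in the Bose--Mesner algebra $\mathfrak{A}$, which is spanned by the primitive idempotents $\{\E_0, \E_1, \E_2\}$, I can write $\D = a_0 \E_0 + a_1 \E_1 + a_2 \E_2$ for suitable scalars. Because $(V,E)$ is connected and $k$-regular, $E_0$ is spanned by $\jv$ and $\E_0 = (1/n)\jv\jv^\top$, so $\I - (1/n)\jv\jv^\top = \E_1 + \E_2$. Substituting $\el = (1/n)\jv$ into the definition $\F_\D(\el) = -(\I - \jv\el^\top)\D(\I - \el\jv^\top)$ and applying $\E_i\E_j = \delta_{ij}\E_i$, the contribution of $\E_0$ vanishes and one obtains $\F_\D((1/n)\jv) = c_1\E_1 + c_2\E_2$ for some $c_1, c_2 \in \R$ (in fact $c_i = -a_i$, but only whether $c_i$ vanishes will matter below). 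This settles the first claim.

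For the dimensionality, I would exploit the fact that $\E_1$ and $\E_2$ are orthogonal projections onto mutually orthogonal subspaces of ranks $m_1$ and $m_2$, so the matrix $c_1\E_1 + c_2\E_2$ is simultaneously diagonalized: its eigenvalue on $E_i$ is $c_i$ with multiplicity $m_i$, while it vanishes on $E_0$. The signature $(p,q)$ of $\F_\D((1/n)\jv)$ is therefore read off directly from the signs of $c_1, c_2$---a positive $c_i$ contributes $m_i$ to $p$, a negative one contributes $m_i$ to $q$, and $c_i = 0$ contributes nothing to either. In every case $p + q = \epsilon(c_1) m_1 + \epsilon(c_2) m_2$, and since $\epsilon(c_i) = \epsilon(a_i)$, Gower's theorem (the first theorem of this section) yields that the dimensionality of $\D$ is exactly $\epsilon(a_1) m_1 + \epsilon(a_2) m_2$.

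The argument is essentially mechanical once the spectral structure of $\mathfrak{A}$ is exploited, so I do not anticipate any substantive obstacle. The only place requiring modest care is the sign bookkeeping in the second paragraph---keeping track of how the minus sign in the definition of $\F$ interacts with $(\I - \E_0)\E_i = \E_i$---but as just noted this has no impact on $\epsilon(\cdot)$ and hence none on the final formula.
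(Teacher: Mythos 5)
Your proof is correct and takes essentially the same approach as the paper, which imports this result from \cite{NSSpre} but sketches exactly your argument around equation~\eqref{eq:srg_rep}: expand $\D$ in the primitive idempotents, use $\I-(1/n)\jv\jv^\top=\E_1+\E_2$ to kill the $\E_0$-term, read the signature of the resulting combination of orthogonal idempotents off the signs of the coefficients, and invoke Gower's theorem. The only wrinkle is the overall sign (with the paper's definition one gets $\F_{\D}((1/n)\jv)=-(a_1\E_1+a_2\E_2)$ for the coefficients of $\D$ itself), which, as you note, does not affect $\epsilon(\cdot)$ and hence not the conclusion.
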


\section{Dimensions of representations of graphs in switching classes} \label{sec:3}
This section considers the dimensionalities of graph representations in a regular graph's switching class.   
For a simple graph $(V, E)$, {\it Seidel switching} (or simply {\it switching}) with respect to a subset $U \subset V$ is the operation that produces a new graph $(V, E')$, where $\{u, v\} \in E'$ if and only if one of the following holds:
\begin{align*}
\{u, v\} \in E & \qquad \text{for } u, v \in U, \\
\{u, v\} \in E & \qquad \text{for } u, v \in V \setminus U, \\
\{u, v\} \notin E & \qquad \text{for } u \in U,\, v \in V \setminus U.
\end{align*}
The {\it switching class} of  $(V,E)$ is the set of all graphs resulting from the switching operation of $(V,E)$ with respect to any subset $U\subset V$. Let $\A$ be the adjacency matrix of $(V,E)$, and $\overline{\A}$ that of its complement graph. 
The {\it Seidel matrix} of $(V,E)$ is $\A-\overline{\A}$. 
For $U\subset V$, let $\I_U$ denote the diagonal matrix indexed by $V$ whose $(u,u)$-entry is $-1$ if $u \in U$ and $1$ otherwise. 
The Seidel matrix of the graph $(V, E')$ obtained by switching with respect to $U$ is given by $\A' - \overline{\A'} = \I_U (\A - \overline{\A}) \I_U$,  
where $\A'$ is the adjacency matrix of $(V, E')$.  
Throughout this paper, we use the symbol $'$ (apostrophe) to denote the corresponding object associated with the graph obtained by switching.

We investigate the relationship between the dimensionalities of
$\D=\D(a,b)=a\A+b\overline{\A}$ and
$\D'=\D'(a,b)=a\A'+b\overline{\A'}$.
We begin by summarizing the main results obtained in this section.

For a regular graph $(V,E)$, determining the dimensionality of $\D'$,
which is equivalent to determining the signature of $\F_{\D'}(\ell)$
via Theorem~\ref{thm:dim_PMP},
requires computing the main eigenvalues of $\D'$ and the corresponding
main angles. 
However, deriving these quantities directly from the spectral information
of $\A$ alone is difficult.
Instead, we consider the matrix $2\D'-(a+b)\J$, noting that
\[
\operatorname{sign}(\F_{2\D'-(a+b)\J}(\ell))
=
\operatorname{sign}(\F_{\D'}(\ell)).
\]

The matrix $2\D-(a+b)\J$ can be written as
\begin{equation}\label{eq:D_S}
2\D-(a+b)\J=\Sm_{\D}-(a+b)\I,
\end{equation}
where
\[
\Sm_{\D}=\D-\overline{\D}, 
\qquad  
\overline{\D}=\overline{\D}(a,b)=a\overline{\A}+b\A.
\]
Since $\A$ and $\Sm_{\D}$ share the same eigenspaces
(Lemma~\ref{lem:eigen_SD}),
the eigenvalues of $2\D-(a+b)\J$ can be determined from
\eqref{eq:D_S}.
Moreover, because $\Sm_{\D}$ and $\Sm_{\D'}$ have the same eigenvalues
(Lemma~\ref{lem:eigen_SD'}),
the eigenvalues of
$2\D'-(a+b)\J=\Sm_{\D'}-(a+b)\I$
can also be determined
(Lemma~\ref{lem:eigen_2D'}).

To determine
$\operatorname{sign}(\F_{\D'}(\ell))
=
\operatorname{sign}(\F_{2\D'-(a+b)\J}(\ell))$
via Theorem~\ref{thm:dim_PMP},
it remains to identify which eigenvalues of
$2\D'-(a+b)\J$ are main and to compute the corresponding main angles.
Equivalent conditions for an eigenvalue of
$2\D'-(a+b)\J$ to be main are given in
Lemmas~\ref{lem:switch_main} and \ref{lem:mu_0_main},
and the associated main angles can be computed using
Lemma~\ref{lem:cal_angle}.

When the number of main eigenvalues of $2\D'-(a+b)\J$ is at most two,
the main angles can be written explicitly by
Lemma~\ref{lem:main_angle2}.
This situation occurs, in particular, for strongly regular graphs,
whose detailed treatment is deferred to the next section.

On the other hand, for a regular graph $(V,E)$ admitting an equitable
partition $\{U,V\setminus U\}$,
the distinct main eigenvalues of $2\D'-(a+b)\J$
can be expressed in terms of the eigenvalues of $\A$
(Lemma~\ref{lem:eigen_2D'} and Theorem~\ref{thm:regular_angle}),
and the number of main eigenvalues is again at most two.

First, the following lemma is on the spectral information of $\Sm_{\D}$. 
\begin{lemma} \label{lem:eigen_SD}
Let $(V,E)$ be a $k$-regular graph of order $n$ with adjacency matrix $\A$. 
 Let $\lambda_0=k,\lambda_1,\ldots,\lambda_r$ be the distinct eigenvalues of $\A$, and  $E_0,E_1,\ldots,E_r$ the corresponding eigenspaces.  
Then, the following hold. 
\begin{enumerate}
\item The eigenspaces of $\A$ 
and $\Sm_{\D}$ are the same. 
\item The distinct eigenvalues of $\Sm_{\D}$ for $E_i$ are 
$\tau_0=(a-b)(2k-n+1)$ and $\tau_i=(a-b)(2\lambda_i+1)$ for $i\in\{1,\ldots,r\}$. 
\end{enumerate}
\end{lemma}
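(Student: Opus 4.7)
The plan is to reduce $\Sm_{\D}$ to an explicit polynomial in $\A$ (plus $\J$ and $\I$), so that its action on each eigenspace of $\A$ can be read off directly. First I would rewrite
\[
\Sm_{\D} = \D - \overline{\D} = (a\A + b\overline{\A}) - (b\A + a\overline{\A}) = (a-b)(\A - \overline{\A}),
\]
and then substitute $\overline{\A} = \J - \I - \A$ to obtain
\[
\Sm_{\D} = (a-b)(2\A - \J + \I).
\]
Since $\A$, $\J$, and $\I$ all lie in the commutative algebra generated by $\A$ for a regular graph, this identity already hints that $\Sm_{\D}$ is diagonal in the common eigenbasis of $\A$.

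Next I would verify preservation of eigenspaces and compute the eigenvalues on each. Because $(V,E)$ is $k$-regular, $\jv$ spans $E_0$ with $\A\jv = k\jv$ and $\J\jv = n\jv$, so
\[
\Sm_{\D}\jv = (a-b)(2k - n + 1)\jv = \tau_0 \jv.
\]
For $i \geq 1$, any $\vv \in E_i$ is orthogonal to $\jv$ (since $\A$ is symmetric with $E_0 = \R \jv$), hence $\J \vv = \boldsymbol{0}$, and therefore
\[
\Sm_{\D} \vv = (a-b)(2\A - \J + \I)\vv = (a-b)(2\lambda_i + 1)\vv = \tau_i \vv.
\]
This simultaneously proves (1) and (2): every vector in $E_i$ is an eigenvector of $\Sm_{\D}$ with eigenvalue $\tau_i$, so $E_i$ is contained in the $\tau_i$-eigenspace of $\Sm_{\D}$, and summing dimensions across $i = 0, 1, \ldots, r$ yields $n$, forcing equality of the eigenspaces.

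The only point requiring any care is to ensure the $\tau_i$'s really are distinct as labelled, since the statement speaks of the distinct eigenvalues of $\Sm_{\D}$ corresponding to $E_i$. If some $\tau_i = \tau_j$ with $i\neq j$, the two eigenspaces of $\A$ fuse into a single eigenspace of $\Sm_{\D}$, but this does not affect the claim that each $E_i$ is contained in the $\tau_i$-eigenspace; I would note this explicitly. Beyond that, the lemma is a direct calculation with no real obstacle: the work is entirely in the algebraic identity $\Sm_{\D} = (a-b)(2\A - \J + \I)$ and the standard fact that non-principal eigenspaces of a regular graph lie in $\jv^{\perp}$.
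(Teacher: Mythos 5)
Your proof is correct and follows essentially the same route as the paper, whose entire argument is the identity $\Sm_{\D}=(a-b)(2\A-\J+\I)$; you simply spell out the evaluation on $E_0$ and on $E_i\subset\jv^{\perp}$ that the paper leaves implicit. Your closing caveat about possible coincidences among the $\tau_i$ is a reasonable extra precision that the paper omits.
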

\begin{proof}
The assertion is clear from the equation
$
\Sm_{\D}= (a-b)(2 \A-\J+\I)$. 
\end{proof}
The matrix $\Sm_{\D}$ can be expressed by the forms 
\begin{align*}
\Sm_{\D}&=(a-b)(\A-\overline{\A})\\ 
&=2 \D-(a+b) \J+ (a+b)\I.
\end{align*}
After switching with respect to $U\subset V$, the matrix $\Sm_{\D}$ is changed as    
$\I_U \Sm_{\D} \I_U=(a-b)(\A'-\overline{\A'})=2 \D'-(a+b) \J+ (a+b)\I=\Sm_{\D'}$, where $\D'=a\A'+b\overline{\A'}$ is a dissimilarity matrix on $(V,E')$. 
The following is the relationship of the spectral information of $\Sm_{\D}$ and $\Sm_{\D'}$. 
\begin{lemma}\label{lem:eigen_SD'}
Let $(V,E)$ be a $k$-regular graph. The notation is defined as above.  
Then, the following hold. 
\begin{enumerate}
\item The eigenvalues of  $\Sm_{\D'}$ are the same as the eigenvalues $\tau_i$ of $\Sm_{\D}$, where $\tau_i$ is given in  Lemma~\ref{lem:eigen_SD}.  
\item The eigenspace of $\Sm_{\D'}$ associated with $\tau_i$ is $\I_U E_i=\{\I_U \vv \colon\, \vv \in E_i \}$ for $i\in \{0,1,\ldots, r\}$. 
\end{enumerate}
\end{lemma}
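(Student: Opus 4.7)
The proof plan rests on the single algebraic identity stated just before the lemma, namely
\[
\Sm_{\D'} = \I_U \Sm_{\D} \I_U,
\]
combined with the fact that $\I_U$ is an involution, i.e.\ $\I_U^2 = \I$. This makes $\Sm_{\D'}$ and $\Sm_{\D}$ conjugate by an orthogonal (in fact diagonal $\pm 1$) matrix, so part (1) is immediate from similarity: conjugation preserves the multiset of eigenvalues, hence the distinct eigenvalues of $\Sm_{\D'}$ coincide with the $\tau_i$ from Lemma~\ref{lem:eigen_SD}.

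For part (2), I would verify directly that $\I_U$ carries the $\tau_i$-eigenspace of $\Sm_{\D}$ onto that of $\Sm_{\D'}$. Given any $\vv \in E_i$, Lemma~\ref{lem:eigen_SD}(1) tells us $\Sm_{\D}\vv = \tau_i \vv$, so
\[
\Sm_{\D'} (\I_U \vv) = \I_U \Sm_{\D} \I_U (\I_U \vv) = \I_U \Sm_{\D} \vv = \tau_i (\I_U \vv),
\]
using $\I_U^2 = \I$. Thus $\I_U E_i$ is contained in the $\tau_i$-eigenspace of $\Sm_{\D'}$. Since $\I_U$ is invertible we have $\dim (\I_U E_i) = \dim E_i$, and summing over $i$ accounts for the full space $\R^n$; so the containment must be an equality for every $i$.

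There is essentially no obstacle here — the entire content of the lemma is contained in the conjugation relation, and the only thing to be slightly careful about is that the eigenspaces $E_i$ of $\A$ really are the eigenspaces of $\Sm_{\D}$, which is exactly the content of Lemma~\ref{lem:eigen_SD}(1). The write-up should therefore be short: state the conjugation, invoke similarity for (1), and display the one-line eigenvector computation above together with the dimension count for (2).
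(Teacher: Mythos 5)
Your proposal is correct and is exactly the paper's argument: the paper's proof simply observes that $\I_U$ is orthogonal (an involution) and that $\I_U \Sm_{\D} \I_U = \Sm_{\D'}$, from which both parts follow by similarity. Your write-up just makes the one-line eigenvector computation and the dimension count explicit, which the paper leaves as ``clear.''
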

\begin{proof}
The assertions are clear since the matrix $\I_U$ is an orthogonal matrix and $\I_U \Sm_{\D} \I_U=\Sm_{\D'}$.
\end{proof}
The eigenvalues of $\Sm_{\D}$ and $\Sm_{\D'}$ are the same, but its main angles may be changed. 

Note that the signature of $\F_{\D'}(\el)$ is the same as that of $\F_{2\D'-(a+b)\J}(\el)$.
If we can determine the eigenvalues and the main angles of $\Sm_{\D'}-(a+b)\I=2\D'-(a+b)\J$, 
then the signature of $\F_{\D'}(\el)$ is obtained by Theorem~\ref{thm:dim_PMP}. 
The eigenvalues of $2\D'-(a+b)\J$ are easily obtained from Lemma~\ref{lem:eigen_SD'} as follows. 
\begin{lemma} \label{lem:eigen_2D'}
Let $(V,E)$ be a $k$-regular graph. The notation is defined as above. 
Let $\lambda_0=k,\lambda_i$ be the eigenvalues of $\A$, 
and $\tau_i$ those of $\Sm_{\D}$, which are given in  Lemma~\ref{lem:eigen_SD}. 
Then the eigenvalues of $2\D'-(a+b)\J$ are  
\begin{align*}
\mu_0&=\tau_0-(a+b)=(a-b)(2k-n+1)-(a+b),\\ 
\mu_i&=\tau_i-(a+b)=(a-b)(2\lambda_i+1)-(a+b) \text{ for $i\ne 0$,}
\end{align*}
 and the eigenspaces associated with $\mu_i$ are $\I_U E_i$. 
\end{lemma}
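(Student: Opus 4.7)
The plan is to assemble this lemma from identities and results already established in the preceding paragraphs; there is no genuine obstacle, only bookkeeping.

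First I would rewrite the matrix of interest using the identity displayed in~\eqref{eq:D_S} applied to the switched graph $(V,E')$. The discussion immediately after Lemma~\ref{lem:eigen_SD} gives the analogous identity for the switched dissimilarity matrix, namely
\[
2\D' - (a+b)\J = \Sm_{\D'} - (a+b)\I,
\]
so the eigenstructure of $2\D' - (a+b)\J$ is obtained from that of $\Sm_{\D'}$ simply by a scalar shift of $-(a+b)$ on each eigenvalue, with eigenspaces unchanged.

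Next I would invoke Lemma~\ref{lem:eigen_SD'}, which asserts that the eigenvalues of $\Sm_{\D'}$ coincide with the eigenvalues $\tau_i$ of $\Sm_{\D}$ given in Lemma~\ref{lem:eigen_SD}, and that the eigenspace attached to $\tau_i$ in $\Sm_{\D'}$ is $\I_U E_i$. Combining this with the shift above gives eigenvalues $\tau_i - (a+b)$ with the same eigenspaces $\I_U E_i$.

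Finally, substituting the explicit expressions $\tau_0=(a-b)(2k-n+1)$ and $\tau_i=(a-b)(2\lambda_i+1)$ for $i\neq 0$ from Lemma~\ref{lem:eigen_SD} into $\tau_i - (a+b)$ yields precisely the stated formulas for $\mu_0$ and $\mu_i$. Since the chain consists of a trivial scalar shift applied to an already-identified eigenbasis, the argument is essentially one line of algebra after the two cited lemmas have been stated.
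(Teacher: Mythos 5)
Your proposal is correct and follows exactly the route the paper intends (the paper states this lemma without a written proof, noting only that it is ``easily obtained from Lemma~\ref{lem:eigen_SD'}''): rewrite $2\D'-(a+b)\J$ as $\Sm_{\D'}-(a+b)\I$ using the identity established just before Lemma~\ref{lem:eigen_SD'}, apply that lemma to get the eigenvalues $\tau_i$ and eigenspaces $\I_U E_i$ of $\Sm_{\D'}$, and shift by $-(a+b)$. Nothing is missing.
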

\begin{remark} \label{rem:two_D}
    The two matrices $2\D - (a+b)\J$ and $2\D' - (a+b)\J$ have the same eigenvalues by equality \eqref{eq:D_S}, since $\Sm_{\D}$ and $\Sm_{\D'}$ have the same eigenvalues. 
\end{remark}

To determine the main angles of $2\D'-(a+b)\J$, we provide a necessary and sufficient condition 
for the eigenvalues to be main. 
\begin{lemma} \label{lem:switch_main}
Let $(V,E)$ be a $k$-regular graph. The notation is defined as above. 
Then the following are equivalent for $i= 1,\ldots,r$.  
\begin{enumerate}
\item $\mu_i$ is not a main eigenvalue of $2\D'-(a+b)\J$. 
\item $\vv^\top(\I_U\jv) = 0$ for each $\vv \in E_i$.
\item The characteristic vector of $U$ is in 
$E_i^\perp =\bigoplus_{ j \ne i} E_j$. 
\end{enumerate}
\end{lemma}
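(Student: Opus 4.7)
The plan is to exploit the description of the eigenspaces of $2\D'-(a+b)\J$ given in Lemma~\ref{lem:eigen_2D'}, namely that the eigenspace associated with $\mu_i$ is $\I_U E_i$. By definition, $\mu_i$ is a main eigenvalue if and only if its eigenspace is not contained in $\jv^\perp$; equivalently, $\mu_i$ is non-main if and only if $(\I_U\vv)^\top\jv=0$ for every $\vv\in E_i$. Since $\I_U$ is a symmetric matrix (being diagonal), $(\I_U\vv)^\top\jv=\vv^\top(\I_U\jv)$, which immediately yields the equivalence of $(1)$ and $(2)$.

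For the equivalence of $(2)$ and $(3)$, I would first rewrite $\vv^\top(\I_U\jv)=0$ for all $\vv\in E_i$ as the orthogonality statement $\I_U\jv\in E_i^\perp$. Next I would use the simple identity
\[
\I_U\jv=\jv-2\chi_U,
\]
where $\chi_U$ is the characteristic vector of $U$, obtained by noting that $\I_U$ has entry $-1$ on $U$ and $1$ elsewhere. Since $(V,E)$ is $k$-regular, $\jv$ lies in $E_0$, and therefore $\jv\in E_i^\perp$ for each $i\in\{1,\ldots,r\}$. Because $E_i^\perp$ is a subspace, we can conclude that $\I_U\jv\in E_i^\perp$ if and only if $\chi_U\in E_i^\perp$, and the latter is exactly condition $(3)$ by the orthogonal decomposition $\R^n=\bigoplus_{j=0}^r E_j$.

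Combining the two equivalences gives $(1)\Leftrightarrow(2)\Leftrightarrow(3)$. There is no substantial obstacle here: the argument is a direct translation between the orthogonality conditions, and the only observation that must not be overlooked is the use of regularity to place $\jv$ in $E_0$, which is what allows us to pass between $\I_U\jv$ and $\chi_U$ without affecting membership in $E_i^\perp$ for $i\geq 1$.
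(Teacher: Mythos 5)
Your argument is correct and follows essentially the same route as the paper's proof: both parts rest on the symmetry of $\I_U$ for $(1)\Leftrightarrow(2)$, and on the identity $\uv=\tfrac12(\jv-\I_U\jv)$ together with $\jv\in E_0\subset E_i^\perp$ for $i\ge 1$ for $(2)\Leftrightarrow(3)$. No gaps.
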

\begin{proof}
$(1) \Leftrightarrow (2)$: An eigenvalue $\mu_i$ is not a main eigenvalue if and only if  
 $\boldsymbol{w}^\top \jv =0$ for each $\boldsymbol{w} \in \I_U E_i$, which is equivalent to $\vv^\top(\I_U\jv) = 0$ for each $\vv \in E_i$.  

$(2) \Leftrightarrow (3)$: The condition $\vv^\top(\I_U\jv) = 0$ for each $\vv \in E_i$ is equivalent to $\I_U\jv \in E_i^\perp =\bigoplus_{ j \ne i} E_j$. 
If $i \ne 0$ holds, then $\jv \in E_0 \subset \bigoplus_{ j \ne i} E_j$. 
The characteristic vector $\uv$ of $U$ can be expressed by a linear combination of  $\I_U\jv$ and $\jv$, namely $\uv=(1/2)(\jv-\I_U\jv)$.
Thus, $\I_U\jv \in \bigoplus_{ j \ne i} E_j$ and $i \ne 0$ if and only if $\uv \in \bigoplus_{ j \ne i} E_j$. 
\end{proof}
\begin{lemma} \label{lem:mu_0_main}
Let $(V,E)$ be a $k$-regular graph. 
Let $V_1,\ldots, V_t$ be the connected components of $(V,E)$.  
The notation is defined as above. 
Let $U_i=U \cap V_i $. 
Then, the following are equivalent.
\begin{enumerate}
    \item $\mu_0$ is not a main eigenvalue of $2\D'-(a+b)\J$. 
    \item $|U_i|=|V_i|/2$ for each $i \in \{1,\ldots, t\}$. 
\end{enumerate} 
\end{lemma}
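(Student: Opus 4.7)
The plan is to argue that the $(1) \Leftrightarrow (2)$ step in the proof of Lemma~\ref{lem:switch_main} did not use $i \ne 0$ anywhere, so the same reasoning gives that $\mu_0$ is not a main eigenvalue of $2\D'-(a+b)\J$ if and only if $\vv^\top (\I_U \jv) = 0$ for every $\vv \in E_0$, equivalently $\I_U \jv \in E_0^\perp$. Thus the task reduces to characterising when $\I_U \jv$ is orthogonal to the all-ones eigenspace of a $k$-regular graph.

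Next I would invoke the standard fact that for a $k$-regular graph the top eigenspace $E_0$ (associated with $\lambda_0 = k$) is spanned by the characteristic vectors $\jv_{V_1},\ldots, \jv_{V_t}$ of the connected components $V_1,\ldots,V_t$. Hence $\I_U \jv \in E_0^\perp$ is equivalent to $\jv_{V_i}^\top (\I_U \jv) = 0$ for every $i \in \{1,\ldots,t\}$.

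Finally I would compute this inner product directly from the definition of $\I_U$. Since $(\I_U \jv)_v = -1$ if $v \in U$ and $+1$ otherwise, setting $U_i = U \cap V_i$ yields
\[
\jv_{V_i}^\top (\I_U \jv) = \sum_{v \in V_i} (\I_U \jv)_v = -|U_i| + (|V_i| - |U_i|) = |V_i| - 2|U_i|.
\]
This vanishes precisely when $|U_i| = |V_i|/2$, completing the equivalence.

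There is no substantial obstacle here. The only point requiring a little care is that Lemma~\ref{lem:switch_main} is stated for $i \in \{1,\ldots,r\}$, so I would explicitly note that its first equivalence also holds for $i = 0$ (its proof only uses that $\I_U$ is orthogonal and that $\I_U E_i$ is the eigenspace of $\mu_i$ for $\Sm_{\D'}$, facts valid for $i = 0$ as well), whereas the reformulation in terms of the characteristic vector of $U$ in $(3)$ breaks down for $i = 0$ because $\jv \in E_0$.
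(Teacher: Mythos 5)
Your proposal is correct and follows essentially the same route as the paper: the paper likewise reduces to the condition $\I_U E_0 \subset \jv^\perp$ (equivalent to your $\I_U\jv \in E_0^\perp$ since $\I_U$ is symmetric), uses $E_0={\rm Span}_{\mathbb{R}}\{\jv_{V_1},\ldots,\jv_{V_t}\}$, and concludes via the same computation $\jv_{V_i}^\top(\I_U\jv)=|V_i|-2|U_i|$. Your explicit remark that the first equivalence of Lemma~\ref{lem:switch_main} extends to $i=0$ while the characteristic-vector reformulation does not is a correct and careful observation, though the paper sidesteps it by citing Lemma~\ref{lem:eigen_2D'} directly.
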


\begin{proof}
From Lemma~\ref{lem:eigen_2D'}, $\mu_0$ is not a main eigenvalue 
if and only if $\I_U E_0 \subset \jv^\perp $. 
Let $\jv_{i}$ be the characteristic vector of $V_i$. 
From $E_0={\rm Span}_{\mathbb{R}}\{ \jv_{1},\ldots, \jv_{t} \}$, it follows that  $\I_U E_0 \subset \jv^\perp $ if and only if 
$|U_i|=|V_i|/2$ for each $i \in \{1,\ldots, t\}$. 
\end{proof}

\begin{lemma} \label{lem:cal_angle}
Let $(V,E)$ be a $k$-regular graph. The notation is defined as above. 
If $\mu_i$ is a main eigenvalue, then the main angle $\beta_i$ satisfies
\begin{equation}\label{eq:angle_D'}
\frac{1}{n\beta_i^2}=\min_{\vv \in E_i:\, \vv^\top (\I_U \jv)=1} \vv^\top \vv. 
\end{equation}
\end{lemma}
\begin{proof} 
If $\mu_i$ is a main eigenvalue, then the main angle $\beta_i$ of  $\I_U E_i$ satisfies
\begin{equation*}
\frac{1}{n\beta_i^2}=
\min_{\boldsymbol{w} \in \I_U E_i:\, \boldsymbol{w}^\top \jv=1} \boldsymbol{w}^\top \boldsymbol{w}
=\min_{\vv \in E_i:\, \vv^\top (\I_U \jv)=1} \vv^\top \vv
\end{equation*}
from \eqref{eq:mainangle}.
\end{proof}
If the main eigenvalues of $2\D'-(a+b)\J$ are only $\mu_0,\mu_1$, then the main angles depend only on $|U|$ as follows. 
\begin{lemma}\label{lem:main_angle2}
Let $(V,E)$ be a $k$-regular graph of order $n$.  The notation is defined as above. 
If the main eigenvalues of $2\D'-(a+b)\J$ are only $\mu_0$ and $\mu_1$, then the main angles $\beta_i$ of 
$\mu_i$ satisfy \[
n \beta_0^2=\frac{(n-2|U|)^2}{n},\qquad 
n\beta_1^2=n-\frac{(n-2|U|)^2}{n}.
\]
\end{lemma}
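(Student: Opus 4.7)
My plan is to reduce the problem to a single one-parameter minimization via the variational characterization of main angles, exploiting the fact that under the hypothesis the nonzero main angles are restricted to two indices.

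First, I would invoke the general identity $\sum_i \beta_i^2 = 1$ (summed over all distinct eigenvalues of $2\D' - (a+b)\J$). Since all eigenvalues other than $\mu_0$ and $\mu_1$ are non-main by assumption, their main angles vanish, and thus $\beta_0^2 + \beta_1^2 = 1$. Consequently $n\beta_1^2 = n - n\beta_0^2$, so the second formula will follow immediately from the first. This reduces the task to computing $n\beta_0^2$ alone.

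Next, I would apply the variational formula \eqref{eq:angle_D'}:
\[
\frac{1}{n\beta_0^2} = \min_{\vv \in E_0 \,:\, \vv^\top(\I_U\jv) = 1} \vv^\top \vv.
\]
Since $(V,E)$ is a (connected) $k$-regular graph, $E_0 = \mathrm{Span}\{\jv\}$ is one-dimensional, so every $\vv \in E_0$ has the form $\vv = c\jv$. A direct calculation gives $\jv^\top \I_U \jv = (n - |U|) - |U| = n - 2|U|$, so the linear constraint reduces to $c(n - 2|U|) = 1$. When $|U| \ne n/2$ this determines $c$ uniquely, and then $\vv^\top \vv = c^2 n = n/(n - 2|U|)^2$, which rearranges to $n\beta_0^2 = (n - 2|U|)^2/n$, as required.

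Finally, I would dispose of the edge case $|U| = n/2$ by citing Lemma \ref{lem:mu_0_main}: in that case $\mu_0$ is not a main eigenvalue, so $\beta_0 = 0$, which is consistent with the formula $(n-2|U|)^2/n = 0$, and then $n\beta_1^2 = n$ follows. There is no real obstacle here — the whole argument is a one-line minimization once the sum rule has been used — and the only mildly delicate point is making sure the edge case $|U| = n/2$ is cleanly reconciled with the statement, which Lemma \ref{lem:mu_0_main} handles at once.
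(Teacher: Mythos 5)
Your proposal is correct and follows essentially the same route as the paper: apply the variational formula \eqref{eq:angle_D'} to the one-dimensional space $E_0=\mathrm{Span}\{\jv\}$ to get $n\beta_0^2=(n-2|U|)^2/n$, then use $\sum_i\beta_i^2=1$ to obtain $\beta_1$. The only cosmetic difference is your handling of $|U|=n/2$: the paper simply notes this case is excluded by the hypothesis (if $|U|=n/2$ then $\jv^\top(\I_U\jv)=0$ and $\mu_0$ would not be main), whereas you verify consistency via Lemma~\ref{lem:mu_0_main} --- both dispositions are fine.
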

\begin{proof}
First note that $|U|\ne n/2$, otherwise 
 $\jv^\top (\I_U \jv) = 0$, and $\mu_0$ is not a main eigenvalue. 
It follows from \eqref{eq:angle_D'} that 
\begin{align*}
\frac{1}{n \beta_0^2}=  \hat{\jv}^\top \hat{\jv}
=\frac{n}{(n-2|U|)^2},
\end{align*}
where $\hat{\jv}$ is the element of $E_0$ satisfying $\hat{\jv}^\top (\I_U\jv)=1$, that is, $\hat{\jv}=(1/(n-2|U|))\jv$. 
One has $n\beta_1^2=n(1-\beta_0^2)$, which implies the assertion. 
\end{proof}

We aim to identify conditions under which a regular graph $(V,E)$ and a subset $U \subset V$ exist, such that $2\D'-(a+b)\J$ has only two 
main eigenvalues. We will establish that if $\{U,V \setminus U \}$ forms an equitable partition (see the definition below), then the number of main eigenvalues of $2\D'-(a+b)\J$ is
 at most 2. 
Indeed, if a partition $\{U , V\setminus U\}$ is equitable, then the main angles of $\mu_i$ are explicitly written. 
A partition $\pi=\{C_1,\ldots,C_p \}$ of $V$ is {\it equitable} 
if for each $u \in C_i$, the number $b_{ij}=\{v \in C_j \colon\, \{u,v\} \in E\}$ is constant depending only on $i,j$. 
The matrix $\B=(b_{ij})$ is called a {\it quotient matrix} of $\pi$. 
The {\it characteristic matrix} $\boldsymbol{\pi}$ of $\pi$ is the $n\times p$ matrix whose $i$-th column is the characteristic vector of $C_i$. For the adjacency matrix $\A$ of $(V,E)$, 
one has $\A\boldsymbol{\pi}= \boldsymbol{\pi} \B$, and the eigenvalues of $\B$ are eigenvalues of $\A$. 
For an eigenvector $\vv$ of $\A$ corresponding to $\lambda_i$, if $\vv^\top  \boldsymbol{\pi}$ is not the zero vector, then $\vv^\top  \boldsymbol{\pi}$ is a left eigenvector of $\B$ corresponding to $\lambda_i$. 
If $(V,E)$ is a $k$-regular graph, then $k$ is an eigenvalue of $\B$. 
The following theorem shows the main eigenvalues of $2\D'-(a+b)\J$ when $\{U,V\setminus U\}$ is equitable.

\begin{theorem} \label{thm:regular_angle}
Let $(V,E)$ be a $k$-regular graph of order $n$ with connected components $V=V_1\cup \cdots \cup V_t$. 
Let $\pi=\{U, V\setminus U\}$ be an equitable partition of $(V,E)$ with quotient matrix $\B$, where $U\ne \emptyset,V$. Let $U_i=U \cap V_i$. 
Let $\lambda_i$ be an eigenvalue of $(V,E)$, and $\mu_i$ that of  $2\D'-(a+b)\J$ given in Lemma~\ref{lem:eigen_2D'}. 
If $\lambda_0=k$, $\lambda_1$ are the eigenvalues of $\B$,   
then the following hold.
 \begin{enumerate}
\item If $|U_i|\ne |V_i|/2$ for some $i\in \{1,\ldots, t\}$, then the main eigenvalues of $2\D'-(a+b)\J$ are $\mu_0$, $\mu_1$. 
\item If $|U_i|= |V_i|/2$ for each $i \in \{1, \ldots, t\}$, then the main eigenvalue of $2 \D'-(a+b)\J$ is $\mu_1$. 
\end{enumerate}
\end{theorem}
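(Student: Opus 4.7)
The plan is to show that the characteristic vector $\uv$ of $U$ lies in $E_0\oplus E_1$, and then to feed this into Lemmas~\ref{lem:switch_main} and~\ref{lem:mu_0_main} to determine which $\mu_i$ are main.

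First I would exploit the equitable partition. Since $\A\boldsymbol{\pi}=\boldsymbol{\pi}\B$ and $U\notin\{\emptyset,V\}$, the characteristic matrix $\boldsymbol{\pi}$ has rank $2$, so its column space $W$ is a $2$-dimensional $\A$-invariant subspace containing both $\jv=\boldsymbol{\pi}(1,1)^\top$ and $\uv=\boldsymbol{\pi}(1,0)^\top$. The hypothesis says the eigenvalues of $\B$ are $k$ and $\lambda_1$, which are distinct (they are distinct eigenvalues of $\A$). Hence $\B$ is diagonalizable, so $\A|_{W}$ is diagonalizable with simple eigenvalues $k,\lambda_1$. Consequently
\[
W=(W\cap E_0)\oplus (W\cap E_1),
\]
the first summand being the line spanned by $\jv$ and the second a line in $E_1$ spanned by some $\wv=\boldsymbol{\pi}\boldsymbol{x}_1$, where $\boldsymbol{x}_1$ is the right $\B$-eigenvector for $\lambda_1$. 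Writing $\uv=\alpha\jv+\beta\wv$ and pulling back through $\boldsymbol{\pi}$ (which has linearly independent columns) gives $(1,0)^\top=\alpha(1,1)^\top+\beta\boldsymbol{x}_1$; since $\boldsymbol{x}_1$ is not a scalar multiple of $(1,1)^\top$ and $(1,0)^\top$ is not either, uniqueness of the basis expansion forces $\beta\neq 0$. Thus $\uv\in E_0\oplus E_1$ with a nonzero $E_1$-component.

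Next I would apply Lemma~\ref{lem:switch_main} to each $i\geq 1$. For any $i\geq 2$, one has $\uv\in E_0\oplus E_1\subseteq\bigoplus_{j\neq i}E_j$, so $\mu_i$ is not a main eigenvalue of $2\D'-(a+b)\J$. For $i=1$, the nonzero $E_1$-component of $\uv$ means $\uv\notin\bigoplus_{j\neq 1}E_j=E_1^{\perp}$, so $\mu_1$ is main. These conclusions hold in both cases (1) and (2) of the theorem, independent of the sizes $|U_i|$.

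Finally I would invoke Lemma~\ref{lem:mu_0_main} to handle $\mu_0$: it is main if and only if $|U_i|\neq |V_i|/2$ for some $i\in\{1,\ldots,t\}$. Combined with the previous step, this yields exactly the two cases stated: in case (1) the main eigenvalues are $\mu_0$ and $\mu_1$, and in case (2) only $\mu_1$ is main. The only subtlety I anticipate is making sure the decomposition of $W$ really is a direct sum into $E_0$ and $E_1$ parts, which requires the two eigenvalues of $\B$ to be distinct; this is where the hypothesis that the eigenvalues of $\B$ are $\lambda_0=k$ and $\lambda_1$ (distinct eigenvalues of $\A$) is essential, and is the main step to argue carefully.
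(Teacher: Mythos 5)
Your proof is correct, and while it rests on the same two ingredients as the paper's (the quotient matrix $\B$ of the equitable partition, plus Lemmas~\ref{lem:switch_main} and~\ref{lem:mu_0_main}), it runs the key step in the dual direction. The paper works with \emph{left} eigenvectors: for $\vv\in E_i$ with $\vv^\top(\I_U\jv)\ne 0$ it notes that $\vv^\top\boldsymbol{\pi}$ is a nonzero left eigenvector of $\B$, so $\mu_i$ main forces $\lambda_i$ to be an eigenvalue of $\B$; this only narrows the candidates to $\mu_0,\mu_1$, and the paper then needs a separate argument that $\mu_1$ actually \emph{is} main (if $\mu_0$ were the only main eigenvalue one would have $\jv\in\I_U E_0={\rm Span}\{\I_U\jv_1,\ldots,\I_U\jv_t\}$, which is excluded because $U$ is not a union of components). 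You instead use \emph{right} eigenvectors, i.e., the $\A$-invariant column space $W$ of $\boldsymbol{\pi}$, and decompose the characteristic vector itself as $\uv=\alpha\jv+\beta\wv$ with $\wv\in E_1$ and $\beta\ne 0$. This settles both halves at once: $\uv\in E_0\oplus E_1$ rules out $\mu_i$ for $i\ge 2$ via Lemma~\ref{lem:switch_main}, and the nonvanishing $E_1$-component shows positively that $\mu_1$ is main, with no appeal to the fact that the main angles sum to one. A side benefit of your route is that it exhibits the equitable-partition hypothesis as a sufficient condition for the membership $\uv\in E_0\oplus E_1$ that is the operative hypothesis in Theorem~\ref{thm:SRG}; and your closing observation that distinctness of the two eigenvalues of $\B$ is essential is exactly the point at which the paper's argument implicitly excludes $U$ from being a union of connected components.
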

\begin{proof}
Let $\boldsymbol{\pi}$ be the characteristic matrix of $\pi$.
For $i\ne 0$, if $\mu_i$ is a main eigenvalue of $2\D'-(a+b)\J$, then $\vv^\top (\I_U \jv) \ne 0$ for some $\vv \in E_i$ by Lemma \ref{lem:switch_main}. 
If $\vv^\top (\I_U \jv) \ne 0$ for some $\vv=(\vv_x)_{x \in V} \in E_i$, then 
\[
(\vv_x)^\top (\I_U \jv)= -\sum_{x \in U}\vv_x+\sum_{x \in V\setminus U} \vv_x \ne 0,
\]
and $\vv^\top  \boldsymbol{\pi}=(\sum_{x \in U} \vv_x,\sum_{x \in V \setminus U} \vv_x)$ is not the zero vector. 
Then, $\vv^\top  \boldsymbol{\pi}$ is a left eigenvector of $\B$, and hence $\lambda_i$ is an eigenvalue of $\B$. 
Therefore, for $i\ne 0$, the candidate of main eigenvalue $\mu_i$ is only $\mu_1$. 

If $|U_i|\ne |V_i|/2$ for some $i\in \{1,\ldots, t\}$, then $\mu_0$ is a main eigenvalue from Lemma \ref{lem:mu_0_main}. 
If $\mu_0$ is the only main eigenvalue, then $\jv \in \I_U E_0$.  
Since $\jv \not\in \I_U E_0={\rm Span}\{\I_U \jv_1, \ldots, \I_U \jv_t\}$, the eigenvalue $\mu_1$ must be main. 

If $|U_i|= |V_i|/2$ for each $i \in \{1, \ldots, t\}$, then $\mu_0$ is not a main eigenvalue from Lemma \ref{lem:mu_0_main}. The remaining candidate $\mu_1$ must be main. 
\end{proof}
From Lemma~\ref{lem:main_angle2} and Theorem~\ref{thm:regular_angle}, we can determine the signature of $\F_{\D'}(\el)$ (namely the dimensionality of $\D'$) for an equitable partition $\{U,V\setminus U\}$ using main eigenvalues and main angles. 
\section{Minimum dimensionalities for Johnson and Hamming graphs} \label{sec:4}
In this section, 
we determine the minimum dimensionality of the switching class of a given Johnson or Hamming graph. 
Initially, we outline a general method to determine this minimum dimensionality for the switching class of a strongly regular graph. 
Consider a strongly regular graph $(V,E)$ with adjacency matrix $\A$ and eigenspaces $E_0={\rm Span}_{\mathbb{R}}\{ \jv \}$, $E_1$, $E_2$, associated with eigenvalues $\lambda_0=k,\lambda_1,\lambda_2$, respectively.  
Let $\D=\D(a,b)$ be a dissimilarity matrix $\D(a,b)=a \A +b \overline{\A}$ for $a,b \in  \mathbb{R}$.  
For $U\subset V$, the notation $'$ denotes the corresponding object after switching with respect to $U$. 
By Remark~\ref{rem:two_D}, the eigenvalues of $2 \D-(a+b)\J$ and $2 \D'-(a+b)\J$ coincide. 
Consequently, the dimensionality difference of $\D$ and $\D'$ is at most 2 by Theorem~\ref{thm:dim_PMP} (where $\F_{\D}=\F_{2\D-(a+b)\J}$). 
To achieve smaller dimensionalities of $\D'$, we choose distances $a,b \in \mathbb{R}$ such that 
$\D(a,b)$ has a smaller dimensionality. 
Choosing the orthogonal projection matrices $\E_i$ onto $E_i$ as the representations is usually sufficient to determine the minimum dimensionality of the switching class by Theorem~\ref{thm:dimrep}. 
The distances $a$, $b$ of the representation with respect to $E_1$ ({\it resp}. $E_2$) satisfy $b/a=-\lambda_2$ ({\it resp}. $a/b=-\lambda_1-1$) \cite{BB05} for $\lambda_1<\lambda_2$. 
Then, the eigenspace associated with eigenvalue 0 of $2 \D-(a+b)\J$ is $E_2$ or $E_0 \oplus E_2$ from \eqref{eq:srg_rep}.  
From Theorem~\ref{thm:SRG} (1), (i) below, if the characteristic vector $\uv$ of $U$ is not in $E_0\oplus E_1$, 
then the dimensionality of $\D'$ is at least that of $\D$. 
We need to identify all subsets $U$ such that $\uv \in E_0\oplus E_1$, which determines the main eigenvalues of $2 \D'-(a+b)\J$ by Theorem~\ref{thm:SRG} (2), (3), (ii). 
For such $U$, the main angles are calculated using Lemma~\ref{lem:main_angle2}.  
These main eigenvalues and main angles of $2 \D'-(a+b)\J$ determine the dimensionality of $\D'$ by Theorem~\ref{thm:dim_PMP}. 
This method enables the determination of the minimum dimensionality of the switching class of a strongly regular graph. 
Actually, the classification of $U\subset V$ such that $\uv \in E_0\oplus E_1$ is challenging for a general strongly regular graph, but it is possible for Johnson and Hamming graphs. 

The following theorem shows the main eigenvalues of graphs in the switching class of a strongly regular graph. 
\begin{theorem} \label{thm:SRG}
Let $(V,E)$ be a strongly $k$-regular graph. Let $E_0={\rm Span}_{\mathbb{R}}\{ \jv \} $, $E_1$, $E_2$ be the eigenspaces of $(V,E)$.
Let $U$ be a non-empty subset of $V$ with $U\ne V$ and $\uv$ its characteristic vector. 
 Let $\mu_i$ be the eigenvalues of  $2\D'-(a+b)\J$ given in Lemma~\ref{lem:eigen_2D'}, which are the same as those of   $2\D-(a+b)\J$. 
 The notation is the same as that defined at the beginning of this section. 
If the eigenspace associated with eigenvalue 0 of $2\D-(a+b)\J$ is $E_2$, namely $\mu_0,\mu_1 \ne 0, \mu_2=0$, then the following hold. 
\begin{enumerate}
\item If $\uv \not\in E_0 \oplus E_1$ holds,
then the dimensionality of $\D'$ is greater than that of $\D$. 
\item If $\uv \in E_0 \oplus E_1$ and $|U|\ne |V|/2$, then the main eigenvalues of $2\D'-(a+b)\J$
 are $\mu_0$ and $\mu_1$. 
\item If $\uv \in E_0 \oplus E_1$ and $|U|= |V|/2$, then the main eigenvalue of $2 \D'-(a+b)\J$ is $\mu_1$. 
\end{enumerate}
If the eigenspace associated with eigenvalue 0 of $2\D-(a+b)\J$ is $E_0 \oplus E_2$, namely $\mu_1 \ne 0, \mu_0=\mu_2=0$, then the following hold. 
\begin{enumerate}
\item[{\rm (i)}] If $\uv \not\in E_0 \oplus E_1$ or $|U|\ne |V|/2$ holds,
then ${\rm sign}(\F_{\D'}(\el))={\rm sign}(\F_{\D}(\el))$, and the dimensionality of $\D'$ is the same as that of $\D$. 
\item[{\rm (ii)}] If $\uv \in E_0 \oplus E_1$ and $|U|= |V|/2$ holds, then the main eigenvalue of $2 \D'-(a+b)\J$ is $\mu_1$. 
\end{enumerate} 
\end{theorem}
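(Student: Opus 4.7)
The plan is to apply Theorem~\ref{thm:dim_PMP} to $2\D'-(a+b)\J$, using Lemmas~\ref{lem:switch_main} and~\ref{lem:mu_0_main} to track which of $\mu_0,\mu_1,\mu_2$ remain main after switching. Since a strongly regular graph is connected, $E_0$ is one-dimensional, so Lemma~\ref{lem:mu_0_main} reduces to: $\mu_0$ is not main if and only if $|U|=|V|/2$. Lemma~\ref{lem:switch_main} gives, for $i\in\{1,2\}$, that $\mu_i$ is not main if and only if $\uv\in\bigoplus_{j\ne i}E_j$, i.e., the $E_i$-component of $\uv$ vanishes.

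For the main-eigenvalue statements (2), (3), and (ii), the hypothesis $\uv\in E_0\oplus E_1$ immediately forces $\mu_2$ to be non-main, and the criterion on $|U|$ decides whether $\mu_0$ is main. The only subtlety is verifying that $\mu_1$ is genuinely main in (3) and (ii): because $U\ne\emptyset,V$, the vector $\uv$ is not a scalar multiple of $\jv$, so its $E_1$-component is nonzero, giving $\uv\notin E_0\oplus E_2$.

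For the dimensionality statements (1) and (i), I compute the signature of $\F_{\D}(\el)=\F_{2\D-(a+b)\J}(\el)$ on both sides. For the unswitched $\D$, only $\mu_0$ is main since $E_1,E_2\subset\jv^\perp$. In Case A ($\mu_2=0$, $\mu_0,\mu_1\ne 0$), the $0$-eigenspace $E_2$ lies in $\jv^\perp$ and $\sum_i \beta_i^2/\mu_i=1/\mu_0\ne 0$, so alternative (2) or (3) of Theorem~\ref{thm:dim_PMP} applies, yielding dimensionality $p+q-1$. In Case B ($\mu_0=\mu_2=0$), the $0$-eigenspace $E_0\oplus E_2$ contains $\jv$, so alternative (4) applies and the dimensionality is $p+q$. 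For $\D'$ under the hypothesis of (1), Lemma~\ref{lem:switch_main} makes $\mu_2$ main, so $\I_U E_2\not\subset\jv^\perp$; alternative (4) now applies, raising the dimensionality to $p+q$, strictly greater than $p+q-1$. Under either sub-hypothesis of (i), at least one of $\mu_0$ or $\mu_2$ (both equal to $0$) remains main, so $\I_U E_0\oplus\I_U E_2$ still escapes $\jv^\perp$; alternative (4) applies again, and the signature is preserved.

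The only real difficulty is bookkeeping: one must carefully align the three criteria (membership in $E_0\oplus E_1$, the value of $|U|$, and whether a zero-eigenvalue's switched eigenspace escapes $\jv^\perp$) with the four alternatives of Theorem~\ref{thm:dim_PMP}. Once this is set up, all six conclusions follow by direct case analysis; no additional spectral computation is needed beyond what is already encoded in Lemmas~\ref{lem:eigen_SD}--\ref{lem:mu_0_main}.
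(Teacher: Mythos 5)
Your proposal is correct and follows essentially the same route as the paper: Lemmas~\ref{lem:switch_main} and~\ref{lem:mu_0_main} to decide which $\mu_i$ remain main after switching, and Theorem~\ref{thm:dim_PMP} (case (4) versus cases (2)--(3)) to compare signatures of $\F_{\D}(\el)$ and $\F_{\D'}(\el)$. The only cosmetic difference is that you certify $\mu_1$ is main by noting the $E_1$-component of $\uv$ is nonzero, whereas the paper argues that $\jv\not\in\I_U E_0$ forces some remaining eigenvalue to be main; both are valid and the same observation is also what is needed in case (2).
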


\begin{proof}
First note that by Remark~\ref{rem:two_D},  the 
eigenvalues of $2\D-(a+b)\J$ and $2\D'-(a+b)\J$ are the same; in particular, the signatures are the same. Let $\mathcal{E}_0$ be the eigenspace of $2\D-(a+b) \J$ associated with the eigenvalue 0. 
 
(1): From $\mathcal{E}_0=E_2\subset \jv^{\perp}$ 
and Theorem~\ref{thm:dim_PMP}, 
the dimensionality $p_1+q_1$ of $\D$ is less than $p+q$, where $(p_1,q_1)={\rm sign}(\F_{\D}(\el))={\rm sign}(\F_{2\D-(a+b)\J}(\el))$ and $(p,q)={\rm sign}(2\D-(a+b)\J)$. 
  If $\uv \not\in E_0 \oplus E_1$ holds, then $\mu_2=0$ is a main eigenvalue of $2\D'-(a+b)\J$ by Lemma~\ref{lem:switch_main}. 
 Since the eigenspace $\I_U E_2$ of $2\D'-(a+b)\J$ associated with $\mu_2=0$ is not contained in $\jv^\perp$, we have 
\begin{equation*}
    {\rm sign}(\F_{\D'}(\el))={\rm sign}(\F_{2\D'-(a+b)\J}(\el))={\rm sign}(2\D'-(a+b)\J)={\rm sign}(2\D-(a+b)\J)=(p,q)
\end{equation*}    
by Theorem~\ref{thm:dim_PMP} (4). Therefore, the dimensionality of $\D'$ is $p+q$, greater than that of $\D$. 

(2): If $\uv \in E_0 \oplus E_1$ holds, then $\mu_2=0$ is not a main eigenvalue of $2 \D'-(a+b)\J$ by Lemma~\ref{lem:switch_main}. 
Since a strongly regular graph is connected, if $|U|\ne |V|/2$ holds, 
then $\mu_0$ is a main eigenvalue. 
From $\jv \not\in \I_U \E_0$, the remaining eigenvalue $\mu_1$ is also main. 

(3): If $\uv \in E_0 \oplus E_1$ holds, then $\mu_2=0$ is not a main eigenvalue of $2 \D'-(a+b)\J$ by Lemma~\ref{lem:switch_main}. 
Since a strongly regular graph is connected, if $|U|= |V|/2$ holds, 
then $\mu_0$ is not a main eigenvalue. The remaining eigenvalue $\mu_1$ is main. 

(i): 
From $\mathcal{E}_0=E_0 \oplus E_2 \not\subset \jv^\perp$ and Theorem~\ref{thm:dim_PMP} (4), 
it follows that  \[{\rm sign}(\F_{\D}(\el))={\rm sign}(\F_{2\D-(a+b)\J}(\el))={\rm sign}(2\D-(a+b)\J).\]
 By Lemma~\ref{lem:switch_main}, if $\uv \not\in E_0 \oplus E_1$ or $|U|\ne |V|/2$ holds, then $\mu_0=\mu_2=0$ is a main eigenvalue of $2\D'-(a+b)\J$. 
 Since the eigenvalue $0$ of $2\D'-(a+b)\J$ is main, its eigenspace is not contained in $\jv^{\perp}$. 
By Theorem~\ref{thm:dim_PMP} (4), 
\begin{multline*}
    {\rm sign}(\F_{\D'}(\el)) = {\rm sign}(\F_{2\D'-(a+b)\J}(\el))={\rm sign}(2\D'-(a+b)\J)\\={\rm sign}(2\D-(a+b)\J)={\rm sign}(\F_{\D}(\el)).
\end{multline*}  This is the assertion (i). 

(ii): If $\uv \in E_0 \oplus E_1$ and $|U|= |V|/2$ hold, then neither $\mu_0=0$ nor $\mu_2=0$ 
is a main eigenvalue of $2 \D'-(a+b)\J$. Then the remaining eigenvalue $\mu_1$ is main. 
This implies (ii). 
\end{proof}


A clique $C$ of a strongly $k$-regular graph $(V,E)$ is a {\it Delsarte clique} 
if it attains the upper bound $|C|\leq 1-k/\lambda$ \cite{D73}, where $\lambda$ is the smallest eigenvalue of $(V,E)$.  
The Johnson graph $J(m,2)$ is the graph with vertex set $V = \{ S \subset X \colon\, |S| = 2 \}$ and edge set $E = \{ (S,T) \colon\, |S \cap T| = 1 \}$, where $X$ is a finite set of size $m$.
It is easily verified that for $m\geq 5$, the Delsarte cliques of $J(m,2)$ are $\{S \in V \colon\, i \in S\}$ for any $i \in X$, whose sizes are $m-1$. 
The Delsarte cliques of $J(4,2)$ are $\{S \in V \colon\, i \in S\}$ and $\{S \in V \colon\, i \not\in S\}$ for any $i \in X$. 

The following theorem provides the minimum dimensionalities of the switching classes of Johnson graphs.

\begin{theorem} \label{thm:Johnson}
Let $(V,E)$ be a Johnson graph $J(m,2)$ with $m\geq 4$.  
Then, the following show the minimum dimensionality $d$ of the switching class of $(V,E)$ and the switching sets $U$ that give the graphs achieving it, as well as the distances $a,b$ of the representations. 
Here, $|U| \leq |V|/2$, and $a,b$ ($a\ne b$) are up to scaling. 
\begin{enumerate}
\item For $m=4$, one has $d=2$ and $(p,q)=(m-2,0)=(2,0), (m-3,1)=(1,1)$. 
For $(p,q)=(2,0)$,  $U$ is the empty set and $a=1$, $b=0$, the representation is a regular triangle.
For $(p,q)=(1,1)$,  $U$ is the set of two vertices that are not adjacent and $a=1$, $b=0$. 
\item For $m=5$, one has $d=4$ and $(p,q)=(m-1,0)=(4,0)$. 
For the representations, $U$ is the empty set or a Delsarte clique and $a=1$, $b=2$, or
the graph induced by $U$ is a 5-cycle and $a=2$, $b=1$. 
\item For $m=6,7,9$, one has $d=m-1$ and $(p,q)=(m-1,0)$. 
For the representations, $U$ is the empty set or a Delsarte clique and distances $a=1$, $b=2$. 
\item For $m= 8$, one has $d=7$ and $(p,q)=(m-1,0)=(7,0)$. 
For the representations, $U$ is any set and distances $a=1$, $b=2$. 
\item For $m=10$, one has $d=8$ and $(p,q)=(m-2,0)=(8,0)$.   For the representations, $U$ is a Delsarte clique and distances $a=1$, $b=2$. 
\item For $m\geq 11$, one has $d=m-1$, and $(p,q)=(m-2,1),(m-1,0)$. 
For $(p,q)=(m-1,0)$, $U$ is the empty set , and distances $a=1$, $b=2$. 
For $(p,q)=(m-2,1)$, $U$ is a Delsarte clique, and  distances $a=1$, $b=2$.
\end{enumerate} 
\end{theorem}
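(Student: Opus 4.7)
The plan is to specialize the general method outlined at the start of this section to $J(m,2)$, a strongly regular graph on $n = \binom{m}{2}$ vertices of valency $2(m-2)$ with eigenvalues $2(m-2)$ on $E_0 = \mathrm{Span}_{\R}\{\jv\}$, $m-4$ on an eigenspace $E_+$ of multiplicity $m-1$, and $-2$ on an eigenspace $E_-$ of multiplicity $m(m-3)/2$. First I will note that two natural choices of distances annihilate one non-trivial eigenspace: taking $(a,b) = (1,2)$ kills the $E_-$-component of $\D$ (so $\D$ has dimensionality $m-1$ by Theorem~\ref{thm:dimrep}), while $b/a = (m-4)/(m-3)$ kills the $E_+$-component (dimensionality $m(m-3)/2$). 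For $m \ge 5$ the first is smaller, and for $m = 4$ the second attains $2$; these provide the upper bounds.

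The combinatorial core is a classification of the subsets $U \subset V$ whose characteristic vector $\uv$ lies in $E_0 \oplus E_+$, since Theorem~\ref{thm:SRG}(1) tells us that in the $E_+$-representation (Case~A, with $\mu=0$ on $E_-$) only such $U$ can lower the dimension. As $E_0 \oplus E_+ = \mathrm{Span}\{\mathbf{1}_{S_i} : i \in [m]\}$ with $S_i = \{T \in V : i \in T\}$ the $i$-th Delsarte clique, writing $\uv = \sum_i c_i \mathbf{1}_{S_i}$ renders $\uv \in \{0,1\}^V$ equivalent to $c_j + c_k \in \{0,1\}$ for all $j \ne k$. A short case analysis on the number of distinct $c_i$-values will rule out three or more, forcing $U \in \{\emptyset, V, S_{i_0}, V \setminus S_{i_0}\}$ (the empty set, a Delsarte clique, or its complement). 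Dually, $\uv \in E_0 \oplus E_-$ iff $\sum_{T \ni i} \uv_T$ is independent of $i$, i.e.\ $U$ is the edge set of a regular graph on $[m]$; this produces the non-adjacent pairs of $J(4,2)$ and the $5$-cycles of $J(5,2)$, the only small cases where the $E_-$-representation can compete.

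With the candidates in hand, the dimensionalities will be computed via Lemmas~\ref{lem:eigen_2D'} and~\ref{lem:main_angle2} together with Theorem~\ref{thm:dim_PMP}. For $U = S_{i_0}$ in the $E_+$-representation (so $|U| = m-1$), the main eigenvalues of $2\D' - (a+b)\J$ are $\mu_0 = (m-1)(m-8)/2$ and $\mu_+ = -2(m-2)$, with main angles $\beta_0^2 = (m-4)^2/m^2$ and $\beta_+^2 = 8(m-2)/m^2$. A direct computation will then yield
\[
\sum_i \frac{\beta_i^2}{\mu_i} = \frac{-2(m-10)}{m(m-1)(m-8)},
\]
whose sign selects case (2), (1), or (3) of Theorem~\ref{thm:dim_PMP} according as $m = 9$, $m = 10$, or $m \ge 11$. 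This accounts for the drop to dimension $8$ at $m = 10$ and the indefinite signature $(m-2,1)$ for $m \ge 11$; the matching lower bounds follow from Theorem~\ref{thm:SRG}(1) and (i), which preclude any dimensionality reduction when $\uv$ lies outside the relevant subspace.

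I expect the main obstacle to be the small $m$ cases, where multiple $\mu_i$ vanish simultaneously and Case~B of Theorem~\ref{thm:SRG} applies rather than Case~A. For $m = 5$ with $(a,b) = (2,1)$ both $\mu_0$ and $\mu_+$ vanish; to show a $5$-cycle $U$ attains dimension $4$ I would verify $\A \uv = -2\uv + 4\jv$ to place $\uv$ in $E_0 \oplus E_-$, note $|U| = |V|/2$, invoke Case~B(ii), and apply Theorem~\ref{thm:dim_PMP}(3) to extract the signature $(4,0)$. The $m = 4$ case, where the $E_-$-representation already attains dimension $2$ with $(a,b) = (1,0)$ and switching by a non-adjacent pair converts the signature to $(1,1)$, is handled analogously in Case~A with a sum-of-angles calculation of sign $-1/6$.
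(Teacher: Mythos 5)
Your proposal follows the paper's proof essentially step for step: the same two distance choices $(a,b)=(1,2)$ and $b/a=(m-4)/(m-3)$, the same reduction of the switching problem to classifying the subsets $U$ with $\uv\in E_0\oplus E_1$ via the spanning set of Delsarte-clique characteristic vectors, the same main-angle computation via Lemma~\ref{lem:main_angle2} for $U$ a Delsarte clique, and the same sign analysis of $\sum_i\beta_i^2/\mu_i$ feeding into Theorem~\ref{thm:dim_PMP} (your value $-2(m-10)/(m(m-1)(m-8))$ agrees with the paper's $|V|^{-1}\left(-1+2/(m-8)\right)$, and your case split at $m=9,10,\ge 11$ matches). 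The one place you genuinely diverge is the classification of $U$ with $\uv\in E_0\oplus E_2$: you characterize these as edge sets of regular graphs on $[m]$ (correct, since $\uv\perp E_1$ iff the degree $\sum_{T\ni i}\uv_T$ is independent of $i$), which cleanly produces the non-adjacent pairs for $m=4$ and the $5$-cycles for $m=5$; the paper instead appeals to an exhaustive computer search over $(0,1)$-vectors, so your argument is a small improvement there. Two loose ends should be tied up to make the outline complete. First, the case $m=8$, where $\mu_0=0$ and your rational function has a pole: here the zero eigenspace of $2\D-3\J$ is $E_0\oplus E_2$, and since every $U\in\mathcal{U}$ has $|U|=7\ne|V|/2$, Theorem~\ref{thm:SRG}(i) gives ${\rm sign}(\F_{\D'}(\el))=(7,0)$ for every $U$, which is exactly statement (4). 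Second, the minimality claim requires excluding all distance pairs beyond your two special choices: for any $(a,b)$ whose $E_2$-coefficient is nonzero, the dimensionality of $\D'(a,b)$ is at least $m(m-3)/2-2$ by Theorem~\ref{thm:dim_PMP}, which exceeds $m-1$ for $m\ge 6$; only for $m=4,5$ does the second representation compete, and those are precisely the cases you already treat separately.
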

\begin{proof}
The spectrum of a graph consists of its eigenvalues and their multiplicities. 
For the Johnson graph $J(m,2)$, the spectrum is $\{2(m-2)^{(1)},(m-4)^{(m-1)}, -2^{(m(m-3)/2)}\}$, and the corresponding eigenspaces are denoted by $E_0$, $E_1$, and $E_2$, respectively.  By Theorem~\ref{thm:dimrep}, for $m\geq 5$, the minimum dimensionality of $(V,E)$ is $m-1$, obtained from the representation $\D$ with respect to $E_1$.  
  The signature of the representation $\F_{\D}(\el)$ is $(m-1,0)$, and the distances $a=1$, $b=2$.  
  For the other distances $a,b$ up to scaling, $\F_{\D(a,b)}(\jv)$ can be expressed by $\F_{\D(a,b)}(\jv)=c_1 \E_1+c_2 \E_2$ with non-zero $c_2$.  
  From \eqref{eq:srg_rep}, the eigenspace of $\D(a,b)$ associated with eigenvalue 0 has dimension at most $m= \dim E_0 +\dim E_1$.   
  Then, the dimensionality of $\D'(a,b)$ is at least $m(m-3)/2-2$ by Theorem \ref{thm:dim_PMP}. This is larger than the dimensionality $m-1$ of $\D(1,2)$ for $m\geq 6$. 
For $m=4,5$, we have to consider the representations with respect to $E_2$, which is treated later.  
For the distances $a=1,b=2$, we consider $\D'=\D'(1,2)$ that is obtained by switching with respect to $U\subset V$. 

We determine the family $\mathcal{U}$ of all non-empty subsets $U\subset V$ such that $2|U|\leq |V|$ and its characteristic vector $\uv$ is in $E_0 \oplus E_1$. 
If the dimensionality of $\D'$ is smaller than
that of $\D$, then $U$ should be in $\mathcal{U}$. 
Recall that the vertex set $V$ consists of all $2$-element subsets of $\{1,\ldots, m\}$, and for each $i \in \{1,\ldots, m\}$, the set $U_i = \{v \in V \colon\, i \in v\}$ forms a Delsarte clique. 
Let $\uv_i$ be the characteristic vector of $U_i$. 
It is known that $E_0 \oplus E_1={\rm Span}_{\mathbb{R}}\{\uv_1,\ldots, \uv_m\}$ \cite{M03}.  
If the characteristic vector $\uv$ of some $U\subset V$ is in $E_0 \oplus E_1$, then $\uv$ can be expressed by $\uv=\sum_{i=1}^m a_i \uv_i$. 
By comparing the $\{i,j\}$-entry of $\uv$ with $\{i,j\} \in V$, we obtain  
\begin{equation} \label{eq:a_i}
a_i+a_j=\begin{cases}
1 \text{ if $\{i,j\} \in U$},\\
0 \text{ if $\{i,j\} \not\in U$}.
\end{cases}
\end{equation}
The coefficient matrix of any three equations for $\{i,j\},\{j,k\}, \{k,i\} \in V$ in \eqref{eq:a_i} is non-singular. Therefore, the following hold. 
\begin{enumerate}
\item[(i)] If vertices $\{i,j\},\{j,k\},\{k,i\}$ are in $U$, then $a_i=a_j=a_k=1/2$.  
\item[(ii)] If vertices $\{i,j\},\{j,k\}$ are in $U$ and $\{k,i\}$ is not in $U$, then $a_j=1,a_i=a_k=0$. 
\item[(iii)] If a vertex $\{i,j\}$ is in $U$ and $\{j,k\},\{k,i\}$ are not in $U$, then $a_i=a_j=1/2,a_k=-1/2$.  
\item[(iv)] If vertices $\{i,j\},\{j,k\},\{k,i\}$ are not in $U$, then $a_i=a_j=a_k=0$.  
\end{enumerate}
It is noteworthy that $a_i \in \{0,\pm 1/2,1\}$ for any $i$. Since $U$ is not empty, (i), (ii), or  (iii) occurs for some $i,j,k$. 
If (iii) occurs,  
then there exists $k$ such that $a_k=-1/2$. 
Since the case $a_k=-1/2$ is only in (iii), for such $k$ and any distinct $i,j$ with $i,j\ne k$, we have $\{j,k\}, \{k,i\}, \not\in U$, $\{i,j\} \in U$ and $a_i=a_j=1/2$. This situation implies $U=V \setminus U_k$. 
If (i) occurs and (iii) does not occur, then (ii) does not occur and $U=V$. 
If (ii) occurs, then $a_j=1$ for some $j$, and $a_i=0$ for any $i\ne j$, which implies  $U=U_j$. 
Therefore,  $\mathcal{U}$ is the set of Delsarte cliques $\{U_1,\ldots,U_m\}$.

The eigenvalues of $2 \D'-3\J$ are  
\begin{equation} \label{eq:eigen_2D-3J}
\mu_0=\frac{1}{2}(m-1)(m-8), \qquad  
\mu_1=-2(m-2), \qquad \mu_2=0    
\end{equation}
from Lemma \ref{lem:eigen_2D'}. 
The eigenspace associated with 0 of $\D=\D(1,2)$ is $E_2$ for $m\ne 8$, and $E_0 \oplus E_2$ for $m=8$. 

For $m=8$, if $U$ is in $\mathcal{U}$, then $U$ is a Delsarte clique and $|U|=m-1\ne |V|/2$. Thus, ${\rm sign}(\F_{\D}(\el)) = {\rm sign}(\F_{\D'}(\el))$ for each $U\subset V$ by Theorem~\ref{thm:SRG} (i). 
The signature of $\F_{\D'}(\el)$ is $(m-1,0)$ for any $U$, that proves the statement (4). 

For $m\ne 8$, 
if $U \not\in \mathcal{U}$ holds, then
the dimensionality of $\D'$ is greater than $\D$ by Theorem~\ref{thm:SRG} (1). 
For $m=4$, $|U|=|V|/2$ holds for $U \in \mathcal{U}$, and the main eigenvalue of $2\D'-3 \J$ is only $\mu_1<0$ by Theorem \ref{thm:SRG} (3). 
By Theorem~\ref{thm:dim_PMP}, ${\rm sign} (\F_{\D'}(\el))=(3,0)$.  
For $m\ne 4,8$, the main eigenvalues are $\mu_0$ and $\mu_1$ by Theorem \ref{thm:SRG} (2) and 
the main angles are  
\[
|V|\beta_0^2=\frac{1}{2m}(m-1)(m-4)^2, \qquad 
|V|\beta_1^2=\frac{4}{m}(m-1)(m-2), \qquad
\beta_2^2=0
\] 
by Lemma~\ref{lem:main_angle2}. 
From \eqref{eq:eigen_2D-3J}, the signature of the matrix $2\D'-3 \J$ is 
\[
\begin{cases}
(0,m) \text{ if $5 \leq m \leq 7$},\\
(1,m-1) \text{ if $m\geq 9$}.   
\end{cases}
\]
From 
\[
|V|\sum_{i=0}^1\frac{\beta_i^2}{\mu_i}=
-1+\frac{2}{m-8}\begin{cases}
<0 \text{ if $5 \leq m \leq 7$ or $m\geq 11$},\\
=0 \text{ if $m = 10$},\\
>0 \text{ if $m= 9$}
\end{cases}
\]
and Theorem~\ref{thm:dim_PMP}, 
the signature of $\F_{\D'}(\el)$ is 
\[
\begin{cases}
(m-1,0) \text{ if $5 \leq m \leq 7$ or $m=9$},\\
(m-2,0)\text{ if $m = 10$},\\
(m-2,1) \text{ if $m\geq 11$}.   
\end{cases}
\]
This implies the statements $(3)$, $(5)$, and $(6)$. 

Finally, we treat the representation $\D$ with respect to $E_2$ for $m=4,5$. 
We classify non-empty subsets $U$ of $V$ satisfying $2|U| \leq |V|$ and whose characteristic vectors lie in $E_0 \oplus E_2$. 
Since $E_0 \oplus E_1$ is spanned by the characteristic vectors $\uv_i$ of the Delsarte cliques, it follows that $E_1$ is spanned by $m \uv_i-2\jv$ ($i=1,\ldots,m$). 
We can directly classify such subsets $U$ by determining the characteristic vectors that are orthogonal to $m\uv_i - 2\jv$ for all $i$. 

For $m=4$,  $\D$ has distance $a=1$, $b=0$, which is a triangle in $\mathbb{R}^2$. The eigenspace associated with eigenvalue 0 of $\D$ is $E_1$ from \eqref{eq:srg_rep}. 
A non-empty subset $U$ of $V$ ($2|U|\leq |V|$) whose characteristic vector is  in $E_0 \oplus E_2$ forms $U=\{\{i,j\},\{k,l\}\}$ for distinct $i$, $j$, $k$, $l$. For such $U$, the eigenspace $\I_U E_1$ of $2\D'-\J$ associated with $0$ is contained $\jv^\perp$. 
By Theorem \ref{thm:SRG} (2), $\mu_0=2$ and $\mu_2=-4$ are main. Since $\beta_0^2=1/9$ and $\beta_2^2=8/9$, we have $\beta_0^2/\mu_0+\beta_2^2/\mu_2=-1/6<0$.  
By Theorem \ref{thm:dim_PMP} (3), the signature of $\F_{\D'}(\el)$ is $(m-3,1)=(1,1)$.  
This implies the statement $(1)$. 

For $m=5$, $\D$ has distance $a=2$, $b=1$ and ${\rm sign}(\F_{\D}(\el))$ is $(5,0)$. Here $\mu_0=\mu_1=0$ and ${\rm sign}(2\D- 3 \J )=(0,5)$. 
To apply Theorem~\ref{thm:SRG} (ii), we would like to find $U\subset V$ whose characteristic vector $\uv$ is in $E_0\oplus E_2$ and $|U|=|V|/2=5$. If $U$ does not satisfy the condition, ${\rm sign}(\F_{\D}(\el))={\rm sign}(\F_{\D'}(\el))$. Indeed,  $U
$ satisfies the condition if and only if $U$ is a 5-cycle. For such $U$, the eigenspace $\I_U E_1$ of $\D'$ associated with $0$ is contained $\jv^\perp$, and $\mu_2=-6$ is the main eigenvalue of $2\D'-3 \J$. Since $\beta_2^2/\mu_2=-1/6<0$, one has ${\rm sign}(\F_{\D'}(\el))=(4,0)$ by Theorem \ref{thm:dim_PMP} (3). 
This implies the statement $(2)$. 
\end{proof}

\begin{remark}
Theorem \ref{thm:Johnson} (5) for $(p,q)=(8,0)$ corresponds to a largest $2$-distance set in $\mathbb{R}^8$ \cite{L97}. 
Theorem \ref{thm:Johnson} (6) for $(p,q)=(m-2,1)$ corresponds to 
 largest spherical 2-indefinite-distance sets  in $\mathbb{R}^{m-2,1}$ for $m\geq 12$ \cite{L97}. 
For $m=11$, the representation in $\mathbb{R}^{9,1}$ is not spherical. 
Indeed, the main eigenvalues of $2\D'=(2\D'-3\J)+3\J$ can be calculated by Theorem 2.2 in \cite{NS16}, namely they are the roots of the polynomial 
\[
(\mu_0-x)(\mu_1-x)\left(1+3\left( \frac{|V|\beta_0^2}{\mu_0-x}+\frac{|V|\beta_1^2}{\mu_1-x}\right) \right)=x^2-2(m-2)^2
x -2(m-1)(m-2)(m-11).\] 
Thus, ${\rm sign}(\D')=(2,m-2)$ for $m\geq 12$ and ${\rm sign}(\D')=(1,m-2)$ for $m = 11$. 
From Remark~5.2 and Theorem 5.3 in \cite{NSSpre}, 
the embedding is spherical for $m\geq 12$ and not spherical for $m=11$. 
\end{remark}

\begin{remark}
For Theorem \ref{thm:Johnson} (4), we can take any subset $U$ to obtain the minimum dimensionality. 
Indeed, the representations are subsets of the antipodal 3-distance set $X$ with 56 points in the sphere $S^6$. 
The set $X$ attains the absolute bound $|X| \leq 2 \binom{d+1}{2}$ for antipodal spherical $s$-distance set, and it is the tight spherical 5-design \cite{DGS77}. 
The representation $Y$ is a half set of $X$, that satisfies $X=Y\cup (-Y)$ and $2|Y|=|X|$. 
Switching on a half set of an antipodal set corresponds to transforming an element $x$ of $U$ into $-x$.
\end{remark}

\begin{remark}
For Theorem \ref{thm:Johnson} (2) $m=5$, the complement graph of $J(5,2)$ is the Petersen graph. 
Actually, for a 5-cycle $U$, the graphs $\overline{\A}$ and $\A'$ are isomorphic. 
The representation is the largest 2-distance set in $\mathbb{R}^4$ \cite{L97}. 
\end{remark}


The Hamming graph $H(2,m)$ is the graph with vertex set 
$V = X \times X$ and edge set 
$E = \{ ((x_1, y_1), (x_2, y_2)) \colon\, (x_1 = x_2 \text{ and } y_1 \ne y_2) \text{ or } (x_1 \ne x_2 \text{ and } y_1 = y_2) \}$, 
where $X$ is a finite set of size $m$. 
It is easily verified that the Delsarte cliques of $H(2,m)$, whose sizes are $m$, are 
$\{(x,y) \colon\, y \in X\}$ and $\{(y,x) \colon\, y \in X\}$ for any $x \in X$. 
The following theorem provides the minimum dimensionalities of the switching classes of Hamming graphs.

\begin{theorem} \label{thm:Hamming}
Let $(V,E)$ be a Hamming graph $H(2,m)$ with $m\geq 2$.  
Let 
\[
t_m=\frac{m}{2}\left(1-\sqrt{\frac{m-4}{m-2}}\right)
\]
for $m\geq 5$. 
Then, the following show the minimum dimensionality $d$ of the switching class of $(V,E)$ and the switching sets $U$ that give the graphs achieving it, as well as the distances $a,b$ of the representations. 
Here, $|U| \leq |V|/2$, and $a,b$ ($a\ne b$) are up to scaling. 
\begin{enumerate}
\item For $m=2$, one has $d=2m-3=1$ and $(p,q)=(2m-3,0)$. 
For the representations, $U$ is one vertex, and 
the distances are $a=1$, $b=0$, or $U$ is two adjacent vertices, the distances are $(a,b)=(1,0),(2,1)$. 
\item For $m=3$, one has $d=2m-2=4$ and $(p,q)=(2m-2,0)=(4,0)$. 
For  the representations, $U$ is the empty set or a Delsarte clique with $3$ vertices, and the distances are $a=1$, $b=2$,  
or $U$ is the empty set or a coclique on $3$ vertices, and the distances are $a=2$, $b=1$. 
\item For $m=4$, one has $d=2m-3=5$ and $(p,q)=(2m-3,0)=(5,0)$. 
For the representations, $U$ is a disjoint union of two Delsarte cliques, $|U|=2m$, and the distances are $a=1$, $b=2$. 
\item For $m\geq 5$, one has $d=2m-2$ and $(p,q)=(2m-2,0)$, $(2m-3,1)$.  
For the representations, $U$ is a disjoint union of $s$ Delsarte cliques with $0 \leq s\leq \lfloor m/2 \rfloor$, $|U|=sm$, and the distances are $a=1$, $b=2$. 
If $0\leq s<t_m$, then $(p,q)=(2m-2,0)$.   
If $t_m <s \leq  \lfloor m/2 \rfloor$, then $(p,q)=(2m-3,1)$. Note that $t_m$ is not an integer. 
\end{enumerate} 
\end{theorem}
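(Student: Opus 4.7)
The plan is to mirror the strategy used in the proof of Theorem~\ref{thm:Johnson} for Johnson graphs. First I would record the spectrum of $H(2,m)$: it is a strongly regular graph of order $n=m^2$ with eigenvalues $\lambda_0=2(m-1)$, $\lambda_1=m-2$, $\lambda_2=-2$ and multiplicities $1$, $2(m-1)$, $(m-1)^2$. By Theorem~\ref{thm:dimrep} the minimum dimensionality of $(V,E)$ itself is $\dim E_1=2m-2$ for $m\geq 4$, realized by $\D=\D(1,2)$ (which has eigenvalue $0$ on $E_2$); for any other scaling of $(a,b)$ the eigenspace killed is at most $E_0\oplus E_1$, which already forces a dimension that is too large for our purposes once $m\geq 6$, so the relevant dissimilarity matrix is $\D(1,2)$. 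By Lemma~\ref{lem:eigen_2D'} the three eigenvalues of $2\D'-3\J$ are $\mu_0=m(m-4)$, $\mu_1=-2m$, $\mu_2=0$; note $\mu_0=0$ precisely when $m=4$, which distinguishes the regime in which Theorem~\ref{thm:SRG}~(i)--(ii) applies from the regime covered by~(1)--(3).

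Next I would classify all non-empty $U\subsetneq V$ with $\uv\in E_0\oplus E_1$, since by Theorem~\ref{thm:SRG}~(1) this is necessary for $\D'$ to have dimensionality no larger than $\D$. Using the description $V=X\times X$, the $2m$ Delsarte cliques are the rows $R_i=\{(i,y):y\in X\}$ and columns $C_j=\{(x,j):x\in X\}$, and their characteristic vectors span $E_0\oplus E_1$ (which has dimension $2m-1$, with the single relation $\sum_i\uv_{R_i}=\sum_j\uv_{C_j}=\jv$). Writing $\uv=\sum_i a_i\uv_{R_i}+\sum_j b_j\uv_{C_j}$, the $(i,j)$-entry gives the constraint $a_i+b_j\in\{0,1\}$ for all $i,j$. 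A short elementary argument (if three distinct values occurred among the $a_i$, three values of $a_i+b_1$ would have to lie in $\{0,1\}$, which is impossible; the mixed case with two values on each side leads to a contradiction by summing the four corner equations) forces either all $a_i$ equal or all $b_j$ equal. Consequently $U$ is a disjoint union of $s$ rows or of $s$ columns, and by the assumption $|U|\leq |V|/2$ we may take $0\leq s\leq\lfloor m/2\rfloor$.

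For such a $U$, the main eigenvalues and angles of $2\D'-3\J$ are determined by $|U|=sm$ alone via Lemma~\ref{lem:main_angle2}, giving $n\beta_0^2=(m-2s)^2$ and $n\beta_1^2=4s(m-s)$. The sign of $\sum_{i=0}^1\beta_i^2/\mu_i$ is then controlled, up to the positive factor $1/(nm(m-4))$ (for $m\geq 5$), by the quadratic
\[
P(s)=(m-2s)^2-2s(m-s)(m-4)=2(m-2)s^2-2m(m-2)s+m^2,
\]
whose two real roots are $t_m=\frac{m}{2}\bigl(1-\sqrt{(m-4)/(m-2)}\bigr)$ and $m-t_m$. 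A direct computation (solving $(m-3)^2-t^2=1$) shows $(m-2)(m-4)$ is never a perfect square for $m\geq 5$, so $t_m\notin\mathbb{Z}$; the second root exceeds $m/2$ and is irrelevant. Thus the sign of $P(s)$ changes only at $s=t_m$, and applying Theorem~\ref{thm:dim_PMP} to the signature $(1,2m-2)$ of $2\D'-3\J$ yields $(2m-2,0)$ for $0\leq s<t_m$ (Case~2) and $(2m-3,1)$ for $t_m<s\leq\lfloor m/2\rfloor$ (Case~3), proving~(4). For $U$ outside this family, Theorem~\ref{thm:SRG}~(1) immediately gives a dimensionality strictly larger than $2m-2$.

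Finally I would handle the small cases individually. For $m=4$ one has $\mu_0=0$, so Theorem~\ref{thm:SRG}~(i)--(ii) applies: the dimensionality strictly drops only when $\uv\in E_0\oplus E_1$ \emph{and} $|U|=|V|/2=8$, forcing $s=2$ (two disjoint rows or columns); the remaining main eigenvalue is $\mu_1=-8<0$ and Theorem~\ref{thm:dim_PMP}~(3) gives signature $(5,0)$, yielding~(3). For $m=2,3$ one must additionally consider the alternative scaling $(a,b)=(2,1)$, which is the representation of $E_2$ for $m=3$ (since the $E_2$-eigenvalue of $\D(2,1)$ equals $-3$ but the $E_1$-eigenvalue is $m-3=0$) and leads via the same formalism to the dual family of $U$ (disjoint cocliques on $3$ vertices), giving~(2); for $m=2$ the graph is $C_4$ with $\dim E_2=1$ and a direct enumeration yields~(1). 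The main technical obstacle is the classification step in paragraph~2, together with verifying that $t_m$ is never an integer for $m\geq 5$ so that the two signature regimes are cleanly separated; everything else is a mechanical application of Section~\ref{sec:3}.
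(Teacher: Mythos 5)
Your proposal is correct and follows essentially the same route as the paper's proof: the same reduction to $\D(1,2)$ via Theorem~\ref{thm:dimrep}, the same classification of $U$ with $\uv\in E_0\oplus E_1$ through the constraint $a_i+b_j\in\{0,1\}$ (your ``two values on each side'' corner argument is an equivalent repackaging of the paper's case analysis), the same quadratic in $s$ with threshold $t_m$, the same use of Theorem~\ref{thm:SRG}~(i)--(ii) for $m=4$, and the same small-case treatment (your irrationality argument via $(m-3)^2-1$ is in fact a little cleaner than the paper's gcd computation). The only slip is the threshold ``$m\geq 6$'' for discarding alternative scalings, which would leave $m=4,5$ uncovered by your own later case split; the correct inequality $(m-1)^2-2>2m-2$ already holds for all $m\geq 4$, as the paper notes, so the fix is immediate.
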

\begin{proof}
For $m=2$, $H(2,2)$ is the 4-cycle. 
The graphs obtained from switching are the 4-cycle, the claw graph, and three isolated vertices. 
Observing these graphs, we can easily determine the minimum dimensionality of the switching class is 1. 
If $U$ is the set of one vertex, then the graph obtained from the switching is the claw graph, and  
the representation achieving the smallest dimensionality is two points in $\R^1$ with $a=1$, $b=0$ or three points in $\R^1$ with $a=1$, $b=2$. 
If $U$ is the set of two adjacent vertices, then the graph obtained from the switching is the 4-cycle, and the representation achieving the smallest dimensionality is two points in $\R^1$ with $a=1$, $b=0$. This implies the assertion $(1)$. 

We suppose $m> 2$. 
The spectrum of a Hamming graph $(V,E)$ is $\{2(m-1)^{(1)},(m-2)^{(2m-2)}, -2^{((m-1)^2)}\}$, and the corresponding eigenspaces are denoted by $E_0$, $E_1$, and $E_2$, respectively.  By Theorem~\ref{thm:dimrep}, the minimum dimensionality of $(V,E)$ is $2m-2$, which is obtained from the representation with respect to $E_1$ for $m\geq 3$. The signature of the representation $\F_{\D}(\el)$ is $(2m-2,0)$, and the distances $a=1$, $b=2$.  
  For the other distances $a,b$ up to scaling, $\F_{\D(a,b)}(\jv)$ can be expressed by $\F_{\D(a,b)}(\jv)=c_1 \E_1+c_2 \E_2$ with non-zero $c_2$.  
  From \eqref{eq:srg_rep}, the eigenspace of $\D(a,b)$ associated with eigenvalue 0 has dimension at most $m= \dim E_0 +\dim E_1$.   
  Then, the dimensionality of $\D'(a,b)$ is at least $(m-1)^2-2$ by Theorem \ref{thm:dim_PMP}. This is larger than the dimensionality $2m-2$ of $\D(1,2)$ for $m\geq 4$.  We treat the representation with respect to $E_2$ for $m=3$ later. 
For the distances $a=1,b=2$, we consider $\D'=\D'(1,2)$ that is obtained by switching with respect to $U\subset V$. 

We determine the family $\mathcal{U}$ of all non-empty subsets $U\subset V$ such that $2|U| \leq |V|$ and its characteristic vector $\uv$ is in $E_0 \oplus E_1$. 
The vertex set $V$ is $X \times X$, where $X=\{1,\ldots, m\}$, and $U_i=\{(i,v) \in V \colon\,  v \in X\}$ and $V_i=\{(v,i) \in V \colon\,  v \in X\}$ for $i \in\{1,\ldots,m\}$ are the Delsarte cliques. 
Let $\uv_i$ ({\it resp.}\ $\vv_i$) be the characteristic vector of $U_i$ ({\it resp.}\ $V_i$). 
It is known that $E_0 \oplus E_1={\rm Span}_{\mathbb{R}}\{\uv_1,\ldots, \uv_m, \vv_1,\ldots,\vv_m\}$ \cite{M03}.  
If the characteristic vector $\uv$ of some subset $U\subset V$ is in $E_0 \oplus E_1$, then $\uv$ can be expressed by $\uv=\sum_{i=1}^m a_i \uv_i+\sum_{i=1}^m b_i \vv_i$, whose coefficients satisfy 
\begin{equation} \label{eq:a_i2}
a_i+b_j=\begin{cases}
1 \text{ if $(i,j) \in U$},\\
0 \text{ if $(i,j) \not\in U$}.
\end{cases}
\end{equation}
We will show that $\mathcal{U}$ consists of $U_I=\bigcup_{i \in I} U_i$ and $V_I=\bigcup_{i \in I} V_i$ for any index set $I\subset X$ with $|I| \leq \lfloor m/2 \rfloor$. 

Suppose there exist $k,l\in X$ such that $a_1+b_k=1$ ($(1,k) \in U$), $a_1+b_l=0$ ($(1,l) \not\in U$). From these equations, it follows that $b_l+1=b_k$, and hence $a_i+b_k=1$ ($(i,k) \in U$) and $a_i+b_l=0$ ($(i,l) \not\in U$) for any $i \in X$. 
Therefore, $U=V_I$ such that $I=\{k \colon\, a_1+b_k=1\}$.

Suppose $a_1+b_k=1$ ($(1,k) \in U$) for any $k\in X$. 
The choices of $a_i$ is $a_1$ or $a_1-1$. 
If $a_i=a_1$, then $a_i+b_k=1$ ($(i,k) \in U$) for any $k \in X$. 
If $a_i=a_1-1$, then $a_i+b_k=0$ ($(i,k) \not\in U$) for any $k \in X$. 
Therefore, $U=U_I$ such that $I=\{i \colon\, a_i=a_1\}$. 

Suppose $a_1+b_k=0$ ($(1,k) \not\in U$) for any $k\in X$. 
The choices of $a_i$ is $a_1$ or $a_1+1$. 
If $a_i=a_1$, then $a_i+b_k=0$ ($(i,k) \not\in U$) for any $k \in X$. 
If $a_i=a_1+1$, then $a_i+b_k=1$ ($(i,k) \in U$) for any $k \in X$. 
Therefore, $U=U_I$ such that $I=\{i \colon\, a_i=a_1+1\}$. 

From the assumption $2|U| \leq |V|$, we choose $I$ such that $|I| \leq \lfloor m/2 \rfloor$.
Thus, $\mathcal{U}$ consists of $U_I=\bigcup_{i \in I} U_i$ and $V_I=\bigcup_{i \in I} V_i$ for any index set $I\subset X$ with $|I| \leq \lfloor m/2 \rfloor$.

The eigenvalues of $2\D-3\J$ (which are the same as those of  $2\D'-3\J$) are 
\[\mu_0=m(m-4),\qquad \mu_1=-2m, \qquad \mu_2=0\]
by Lemma \ref{lem:eigen_2D'}. 
The eigenspace of $2\D-3\J$ associated with the eigenvalue 0 is 
$E_2$ for $m\geq 5$ and $m=3$, and $E_0 \oplus E_2$ for $m=4$. 
For $m \geq  5$, $m=3$ ({\it resp}. $m=4$), 
if $U \not\in \mathcal{U}$ holds, then
the dimensionality of $\D'$ is greater than ({\it resp}. equal to) that of $\D$ by Theorem~\ref{thm:SRG} (1) ({\it resp}. (i)). 

Consider the cases where the number of main eigenvalues is 1 and 2, respectively. 

(I) If $m$ is even and $|U|=|V|/2$ holds for $U \in \mathcal{U}$, then the main eigenvalue of $2\D'-3 \J$ is only $\mu_1<0$ by Theorem~\ref{thm:SRG} (3), (ii). 
Since the main eigenvalue is only $\mu_1$, we have 
$\jv \in \I_U E_1$, and the eigenspace of $2\D'-3\J$ associated with $0$ is contained in $\jv^\perp$.  
In this case, ${\rm sign} (\F_{\D'}(\el))=(2m-3,0)$ for $m=4$ and ${\rm sign} (\F_{\D'}(\el))=(2m-3,1)$ for $m>4$ even by Theorem~\ref{thm:dim_PMP} (3).   
For $m=4$ and $U \in \mathcal{U}$ with $|U|\ne |V|/2$, we have ${\rm sign} (\F_{\D'}(\el))=(2m-2,0)$ by Theorem~\ref{thm:SRG} (i). 
Therefore the assertion (3) follows. 

(II) For $m \geq 5$, $m=3$ and $U \in \mathcal{U}$ with $|U|\ne |V|/2$,  we calculate the signature of $\F_{\D'}(\el)$ with the main angles of $\mu_0$ and $\mu_1$. 
For $U_I \in \mathcal{U}$ or $V_I \in \mathcal{U}$ with $|I|=s$, the main angles satisfy  
\[
|V|\beta_0^2=(m-2s)^2, \qquad 
|V|\beta_1^2
=4s(m-s), \qquad
\beta_2^2=0
\] 
by Lemma~\ref{lem:main_angle2}. 
The signature of the matrix $2\D'-3 \J$ is 
\[
\begin{cases}
(0,2m-1) \text{ if $ m=3 $},\\
(1,2m-2) \text{ if $m\geq 5$}.   
\end{cases}
\]
It follows that 
\[
|V|\sum_{i=0}^1\frac{\beta_i^2}{\lambda_i}=
\frac{2s(s-m)(m-2)+m^2}{m(m-4)}\begin{cases}
<0 \text{ if $m=3$, or $t_m<s <  m/2 $ and $m\geq 5$},\\
=0 \text{ if $s=t_m$}, \\
>0 \text{ if $0 <s<t_m$ and $m \geq 5$},  
\end{cases}
\]
where 
\[
t_m=\frac{m}{2}\left(1 - \sqrt{\frac{m-4}{m-2}} \right). 
\]

We prove that $\sqrt{(m-4)/(m-2)}$ is irrational, and that the value $t_m$ is not an integer.
Assume, for contradiction, that there exist integers $k, l \in \mathbb{Z}_{>0}$ such that $(m-4)/(m-2) = (k/l)^2$. This assumption is valid since we may take $k, l$ to be positive for $m \geq 5$. 
The greatest common divisor of $m-2$ and $m-4$ is 1 if $m$ is odd, and it is 2 if $m$ is even.  If $m$ is odd, then $k^2=m-4$ and $l^2=m-2$. 
There do not exist positive integers $k,l$ such that $2=l^2-k^2=(l+k)(l-k)$ holds. 
 If $m$ is even, then $k^2=(m-4)/2$ and $l^2=(m-2)/2$. 
There do not exist positive integers $k,l$ such that $1=l^2-k^2=(l+k)(l-k)$ holds. 

By Theorem~\ref{thm:dim_PMP}, 
the signature of $\F_{\D'}(\el)$ is 
\[
\begin{cases}
(2m-2,0) \text{ if $m =3$},\\
(2m-3,1)\text{ if $t_m<s < m/2 $ and $m\geq 5$},\\
(2m-2,0) \text{ if $0< s<t_m$ and $m \geq 5$}.   
\end{cases}
\]

Combining the cases (I) and (II) together with the case $s=0$, we obtain 
\[
\begin{cases}
(2m-3,1)\text{ if $t_m<s \leq \lfloor m/2 \rfloor $ and $m\geq 5$},\\
(2m-2,0) \text{ if $0\leq  s<t_m$ and $m \geq 5$}.   
\end{cases}
\]
This implies the assertion (4). 

For $m=3$, we consider the representation with respect to $E_2$, which has distances $a=2$, $b=1$. 
Actually, $H(2,3)$ is a self-complementary graph.  
The representations with respect to $E_1$ and $E_2$ are isomorphic and the assertion (2) follows. 
\end{proof}

\begin{remark}
For Theorem~\ref{thm:Hamming} (3), the representation is the largest 2-distance set in $\mathbb{R}^5$ \cite{L97}. 
The corresponding graph is known as the complement of the Clebsch graph. 
\end{remark}

\bigskip

\noindent
\textbf{Acknowledgments.} 
H. Nozaki was partially supported by JSPS KAKENHI Grant Numbers JP22K03402 and JP24K06688. 
M. Shinohara was partially supported by JSPS KAKENHI Grant Number JP22K03402. 
S. Suda was partially supported by JSPS KAKENHI Grant Numbers JP22K03402 and JP22K03410.

\quad 

\noindent
\textbf{Declarations}

\quad 

\noindent 
\textbf{Conflict of Interest.}
The authors state
that there is no conflict of interest.

\end{document}